\newcommand{\qdn}{\hspace*{-1.5mm}}
\newcommand{\qqdn}{\hspace*{-2.5mm}}
\newcommand{\xqdn}{\hspace*{-5.0mm}}
\newcommand{\xxqdn}{\hspace*{-10mm}}
\newcommand{\fns}{\footnotesize}
\newcommand{\fnk}[3]{\left[\qdn\ba{#1}#2\\[0.8mm]#3\ea\qdn\right]}
\newcommand{\ffnk}[4]{\left[\qdn\ba{#1}#3\\[0.8mm]#4\ea{\!\Big|\:#2}\right]}
\newcommand{\binm}{\binom}
\newcommand{\nnm}{\nonumber}
\newcommand{\be}{\begin{equation}}
\newcommand{\ee}{\end{equation}}
\newcommand{\ba}{\begin{array}}
\newcommand{\ea}{\end{array}}
\newcommand{\bmn}{\begin{eqnarray}}
\newcommand{\emn}{\end{eqnarray}}
\newcommand{\bnm}{\begin{eqnarray*}}
\newcommand{\enm}{\end{eqnarray*}}
\newcommand{\bln}{\begin{subequations}}
\newcommand{\eln}{\end{subequations}}
\newtheorem{thm}{Theorem}
\newtheorem{corl}[thm]{Corollary}
\newtheorem{prop}[thm]{Proposition}
\newtheorem{entry}{Entry}
\newcommand{\bbtm}[4]{\bibitem{kn:#1}{#2,}~{#3,}~{#4.}}
\newcommand{\cito}[1]{\cite{kn:#1}}
\newcommand{\citu}[2]{\cite[#2]{kn:#1}}
\begin{document} 
{\fns
\title{Extensions of two $q$-Gosper identities \\with an extra integer parameter}

\author{Chuanan Wei$^1$, Qinglun Yan$^2$}
\dedicatory{
$^1$Department of Information Technology\\
  Hainan Medical College, Haikou 571199, China\\
  $^2$College of Mathematics and Physics\\
   Nanjing University of Posts and Telecommunications,
    Nanjing 210046, China}
\thanks{\emph{Email addresses}:
      weichuanan78@163.com (C. Wei), yanqinglun@126.com (Q. Yan)}

\address{ }
\footnote{\emph{2010 Mathematics Subject Classification}: Primary
05A19 and Secondary 33C20.}

\keywords{Hypergeometric series; $q$-Series; Series rearrangement}

\begin{abstract}
According to the method of series rearrangement, we establish the
extensions of two $q$-Gosper identities with an extra integer
parameter. The limiting cases of them produce the generalizations of
Gosper's two $_3F_2(\frac{3}{4})$-series identities with an
additional integer parameter. Meanwhile, several related results are
also given.
\end{abstract}

\maketitle\thispagestyle{empty}
\markboth{Chuanan Wei, Qinglun Yan}
         {Extensions of two $q$-Gosper identities with an extra integer parameter}

\section{Introduction}
For a complex variable $x$, define the shifted-factorial by
 \bnm
 (x)_n=
\begin{cases}
\prod_{k=0}^{n-1}(x+k),&n>0;\\
1,&n=0;\\
 \frac{(-1)^n}{\prod_{k=1}^{-n}(k-x)},&n<0.
\end{cases}
 \enm
 For simplifying the expressions, we shall use the abbreviated symbol:
\[\qqdn\qdn\fnk{ccccc}{a,&b,&\cdots,&c}{\alpha,&\beta,&\cdots,&\gamma}_n
=\frac{(a)_n(b)_n\cdots(c)_n}{(\alpha)_n(\beta)_n\cdots(\gamma)_n}.\]
 Following Bailey~\cito{bailey}, define the hypergeometric series by
\[_{1+r}F_s\ffnk{cccc}{z}{a_0,&a_1,&\cdots,&a_r}{&b_1,&\cdots,&b_s}
 \:=\:\sum_{k=0}^\infty\fnk{ccccc}{a_0,&a_1,&\cdots,&a_r}{1,&b_1,&\cdots,&b_s}_nz^k,\]
where $\{a_{i}\}_{i\geq0}$ and $\{b_{j}\}_{j\geq1}$ are complex
parameters such that no zero factors appear in the denominators of
the summand on the right hand side. Then two
$_3F_2(\frac{3}{4})$-series identities due to Gosper (cf.
\citu{gessel}{Equations (1.4) and (1.5)}) read, respectively, as
 \bmn
&&\qqdn_3F_2\ffnk{cccc}{\frac{3}{4}}{3x,&1-3x,&-n}{&\frac{1}{2},&-3n}
=\fnk{ccccc}{\frac{1}{3}+x,&\frac{2}{3}-x}{\frac{1}{3},&\frac{2}{3}}_{n},
 \label{gosper-a}\\\label{gosper-b}
&&\qqdn_3F_2\ffnk{cccc}{\frac{3}{4}}{3x,&2-3x,&-n}{&\frac{3}{2},&-1-3n}
=\fnk{ccccc}{\frac{2}{3}+x,&\frac{4}{3}-x}{\frac{2}{3},&\frac{4}{3}}_{n}.
 \emn

 For two complex numbers $x$ and
$q$, define the $q$-shifted factorial by
  \bnm
 (x;q)_n=
\begin{cases}
\prod_{i=0}^{n-1}(1-xq^i),&n>0;\\
\quad1,&n=0;\\
\frac{1}{\prod_{j=n}^{-1}(1-xq^j)},&n<0.
\end{cases}
\enm
 The fraction form of it reads as
\[\qqdn\qdn\ffnk{ccccc}{q}{a,&b,&\cdots,&c}{\alpha,&\beta,&\cdots,&\gamma}_n
=\frac{(a;q)_n(b;q)_n\cdots(c;q)_n}{(\alpha;q)_n(\beta;q)_n\cdots(\gamma;q)_n}.\]
 Following Gasper and Rahman \cito{gasper}, define the $q$-series by
\[_{1+r}\phi_s\ffnk{cccc}{q;z}{a_0,&a_1,&\cdots,&a_r}{&b_1,&\cdots,&b_s}
 \!=\!\sum_{k=0}^\infty\ffnk{ccccc}{q}{a_0,&a_1,&\cdots,&a_r}{q,&b_1,&\cdots,&b_s}_k
 \Big\{(-1)^kq^{\binm{k}{2}}\Big\}^{s-r}z^k,\]
where $\{a_i\}_{i\geq0}$ and $\{b_j\}_{j\geq1}$ are complex
parameters such that no zero factors appear in the denominators of
the summand on the right hand side. Then the q-analogues of
\eqref{gosper-a} and \eqref{gosper-b} due to Chu
\citu{chu-a}{Equations (3.9a) and (3.9b)} can respectively be stated
as
  \bmn
&&\qqdn\sum_{k=0}^n(q^{-3n};q^3)_k\ffnk{ccccc}{q}{x,q/x}
{q,-q,q^{1/2},-q^{1/2},q^{-3n}}_kq^k=\ffnk{ccccc}{q^3}{qx,q^2/x}{q,q^2}_n,
 \label{q-gosper-a}\\\label{q-gosper-b}
&&\qqdn\sum_{k=0}^n(q^{-3n};q^3)_k\ffnk{ccccc}{q}{x,q^2/x}
{q,-q,q^{3/2},-q^{3/2},q^{-1-3n}}_kq^k=\ffnk{ccccc}{q^3}{q^2x,q^4/x}{q^2,q^4}_n.
 \emn

 Inspired by the work of \cito{chu-b}-\cito{lavoie-b}, we shall
 establish, in terms of the method of series rearrangement,
 the extensions of \eqref{q-gosper-a} and \eqref{q-gosper-b} with an extra integer
parameter which involve the generalizations of \eqref{gosper-a} and
\eqref{gosper-b} with an additional integer parameter in Section 2.
Meanwhile, several related results are also offered in Section 3.

\section{Extensions of two $q$-Gosper identities}

\begin{thm}\label{thm-a}
For a nonnegative integer $\ell$ and a complex number $x$, there
holds
 \bnm
&&\xqdn\sum_{k=0}^n(q^{-3n};q^3)_k\ffnk{ccccc}{q}{x,q^{1-\ell}/x}
{q,-q^{1-\ell},q^{1/2},-q^{1/2},q^{-3n}}_kq^k=\ffnk{ccccc}{q}{x,-x}{x^2,-1}_{\ell}
\\&&\xqdn\:\:\times\:\sum_{i=0}^{\ell}(-1)^iq^{\ell i}\frac{1-x^2q^{2i-1}}{1-x^2q^{-1}}
\ffnk{ccccc}{q}{q^{-\ell},x^2q^{-1}}{q,x^2q^{\ell}}_i\ffnk{ccccc}{q^3}{q^{1+i}x,q^{2-i}/x}{q,q^2}_{n}.
 \enm
\end{thm}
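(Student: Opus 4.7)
The natural strategy is series rearrangement. The first step is to apply the known $q$-Gosper identity \eqref{q-gosper-a} with $x$ replaced by $q^{i}x$ in order to rewrite each factor
\bnm
\ffnk{ccccc}{q^3}{q^{1+i}x,q^{2-i}/x}{q,q^2}_{n}=\sum_{k=0}^n(q^{-3n};q^3)_k\ffnk{ccccc}{q}{q^{i}x,q^{1-i}/x}{q,-q,q^{1/2},-q^{1/2},q^{-3n}}_k q^k
\enm
appearing on the right-hand side. Substituting this and swapping the (finite) $i$- and $k$-summations puts the $k$-sum outermost; the prefactor $(q^{-3n};q^3)_k\,q^k$ and the denominators $(q;q)_k(q^{1/2};q)_k(-q^{1/2};q)_k(q^{-3n};q)_k$ already match those in the LHS summand. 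The theorem thus reduces to showing that the inner $i$-sum $T_k$ satisfies
\bnm
T_k=\ffnk{cc}{q}{x^2,-1}{x,-x}_\ell\cdot\frac{(x;q)_k(q^{1-\ell}/x;q)_k(-q;q)_k}{(-q^{1-\ell};q)_k}.
\enm

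To bring $T_k$ into a recognizable form I would apply the elementary identity
\bnm
(q^{i}x;q)_k(q^{1-i}/x;q)_k=(x;q)_k(q/x;q)_k\cdot\frac{(xq^{k};q)_i}{(xq^{-k};q)_i}\,q^{-ki},
\enm
which follows from the basic rewrite $(aq^{-i};q)_i=(-a)^iq^{-\binom{i+1}{2}}(q/a;q)_i$ applied to the $q^{-i}$-shifted factor together with standard splittings $(x;q)_{k+i}=(x;q)_k(xq^k;q)_i$. After extracting $(x;q)_k(q/x;q)_k$, the sum $T_k$ becomes a terminating very-well-poised series in $i$ with base parameter $a=x^2/q$ and two free pairs $(q^{-\ell},x^2q^\ell)$ and $(xq^{k},xq^{-k})$; the theorem then reduces to the closed-form evaluation
\bnm
\sum_{i=0}^\ell(-1)^iq^{(\ell-k)i}\,\frac{1-x^2q^{2i-1}}{1-x^2q^{-1}}\,\ffnk{ccc}{q}{x^2q^{-1},q^{-\ell},xq^{k}}{q,x^2q^\ell,xq^{-k}}_i=\ffnk{cc}{q}{x^2,-1}{x,-x}_\ell\ffnk{cc}{q}{q^{1-\ell}/x,-q}{q/x,-q^{1-\ell}}_k.
\enm
The base case $\ell=0$ is trivial, and the $k=0$ specialization can be checked directly by telescoping.

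The main obstacle is this inner evaluation: the sum is not a direct instance of Rogers' or Jackson's $_6\phi_5$ formula, because only two ``free'' parameter pairs appear beyond the very-well-poised core. I would first verify the identity by hand for $\ell=0,1,2$ to pin down the correct form, and then attempt an induction on $\ell$ in which the $i=\ell$ contribution is isolated and the ratio $(q^{-\ell};q)_i/(x^2q^\ell;q)_i$ is rewritten in terms of its $\ell-1$ counterpart via a contiguous relation. An alternative avenue is to derive the identity as a specialization of Watson's $_8\phi_7\to{}_4\phi_3$ transformation, sending two of the four free parameters to suitable limits so that a $_6\phi_5$ degenerates into our $_5\phi_4$-type sum.
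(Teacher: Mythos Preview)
Your overall plan is essentially the paper's argument run in reverse: the paper starts from the left-hand side, inserts a factor of $1$ coming from a terminating $_6\phi_5$ evaluation, interchanges the two finite sums, and then applies \eqref{q-gosper-a} to the inner $k$-sum to land on the right-hand side. You instead expand the right-hand side via \eqref{q-gosper-a}, swap sums, and reduce to an inner $i$-sum identity. Both routes hinge on the same two ingredients, and your reduction to the displayed identity for $T_k$ is correct.

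Where you go astray is in declaring that inner sum ``not a direct instance of Rogers' $_6\phi_5$''. It is. With $a=x^2q^{-1}$, $b=xq^{k}$, $c=-x$ one has $qa/c=-x=c$, so the pair $(c;q)_i/(qa/c;q)_i$ cancels identically; what survives is exactly your very-well-poised sum with the two visible parameter pairs $(q^{-\ell},x^2q^{\ell})$ and $(xq^{k},xq^{-k})$, and the argument $q^{1+\ell}a/(bc)=-q^{\ell-k}$ accounts for your factor $(-1)^i q^{(\ell-k)i}$. The closed form
\[
\ffnk{cc}{q}{qa,\,qa/bc}{qa/b,\,qa/c}_{\ell}
=\ffnk{cc}{q}{x^2,\,-q^{-k}}{xq^{-k},\,-x}_{\ell}
\]
then rewrites, after the standard $(\,\cdot\,q^{-k};q)_\ell$ manipulations, as the product you want. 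This is precisely the specialization the paper invokes (its equation labeled \eqref{series}). So the ``main obstacle'' you identify does not exist, and the proposed detours through induction on $\ell$ or Watson's transformation are unnecessary: once you recognize the hidden self-paired parameter $c=-x$, your argument closes immediately and coincides with the paper's.
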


\begin{proof}
Setting $a=x^2q^{-1}$, $b=xq^{k}$ and $c=-x$ in
 the terminating $_6\phi_5$-series identity (cf. \citu{gasper}{p.
 42}):
 \bmn\label{terminating-65}
\:\:\qquad {_6\phi_5}\ffnk{cccccccc}{q;\frac{q^{1+\ell}a}{bc}}
{a,\:q\sqrt{a},\:-q\sqrt{a},\:b,\:c,\:q^{-\ell}}
 {\sqrt{a},\:-\sqrt{a},\:qa/b,\:qa/c,\:q^{1+\ell}a}
 =\ffnk{ccccc}{q}{qa,qa/bc}{qa/b,qa/c}_\ell,
 \emn
 we get the equation:
 \bmn
&&\ffnk{ccccc}{q}{q^{1-\ell}/x,-q^{1-\ell}/x}{q^{1-\ell}/x^2,-q^{1-\ell+k}}_{\ell}
 \sum_{i=0}^{\ell}(-1)^iq^{\ell i}\frac{1-x^2q^{2i-1}}{1-x^2q^{-1}}
\ffnk{ccccc}{q}{q^{-\ell},xq^k,x^2q^{-1}}{q,x,x^2q^{\ell}}_i
 \nnm\\\label{series}&&\times\:
\ffnk{ccccc}{q}{q^{1-\ell+k}/x}{q^{1-\ell}/x}_{\ell-i}=1.
 \emn
 Then there is the following relation:
  \bnm
&&\xqdn\sum_{k=0}^n(q^{-3n};q^3)_k\ffnk{ccccc}{q}{x,q^{1-\ell}/x}
{q,-q^{1-\ell},q^{1/2},-q^{1/2},q^{-3n}}_kq^k
\\&&\xqdn\:\:=\:\sum_{k=0}^n(q^{-3n};q^3)_k\ffnk{ccccc}{q}{x,q^{1-\ell}/x}
{q,-q^{1-\ell},q^{1/2},-q^{1/2},q^{-3n}}_kq^k
\ffnk{ccccc}{q}{q^{1-\ell}/x,-q^{1-\ell}/x}{q^{1-\ell}/x^2,-q^{1-\ell+k}}_{\ell}
\\&&\xqdn\:\:\times\:
\sum_{i=0}^{\ell}(-1)^iq^{\ell i}\frac{1-x^2q^{2i-1}}{1-x^2q^{-1}}
\ffnk{ccccc}{q}{q^{-\ell},xq^k,x^2q^{-1}}{q,x,x^2q^{\ell}}_i
\ffnk{ccccc}{q}{q^{1-\ell+k}/x}{q^{1-\ell}/x}_{\ell-i}.
 \enm
Interchange the summation order for the last double sum to achieve
  \bnm
&&\xxqdn\sum_{k=0}^n(q^{-3n};q^3)_k\ffnk{ccccc}{q}{x,q^{1-\ell}/x}
{q,-q^{1-\ell},q^{1/2},-q^{1/2},q^{-3n}}_kq^k
\\&&\xxqdn\:\:=\ffnk{ccccc}{q}{q^{1-\ell}/x,-q^{1-\ell}/x}{q^{1-\ell}/x^2,-q^{1-\ell}}_{\ell}
\sum_{i=0}^{\ell}(-1)^iq^{\ell i}\frac{1-x^2q^{2i-1}}{1-x^2q^{-1}}
\ffnk{ccccc}{q}{q^{-\ell},x^2q^{-1}}{q,x^2q^{\ell}}_i
\\&&\xxqdn\:\:\times\:
\sum_{k=0}^n(q^{-3n};q^3)_k\ffnk{ccccc}{q}{xq^i,q^{1-i}/x}
{q,-q,q^{1/2},-q^{1/2},q^{-3n}}_kq^k.
 \enm
 Calculating the series on the last line by
\eqref{q-gosper-a}, we attain Theorem \ref{thm-a} to complete the
proof.
\end{proof}

When $\ell=0$, Theorem \ref{thm-a} reduces to \eqref{q-gosper-a}.
Another concrete formula is displayed as follows.

\begin{corl}[$\ell=1$ in Theorem \ref{thm-a}]\label{corl-a}
 \bnm
&&\xqdn\sum_{k=0}^n(q^{-3n};q^3)_k\ffnk{ccccc}{q}{x,1/x}
{q,-1,q^{1/2},-q^{1/2},q^{-3n}}_kq^k
\\&&\xqdn\:\:=\;
\frac{1}{2}\ffnk{ccccc}{q^3}{qx,q^{2}/x}{q,q^2}_{n}+\frac{1}{2}\ffnk{ccccc}{q^3}{q^2x,q/x}{q,q^2}_{n}.
 \enm
\end{corl}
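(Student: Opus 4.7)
The approach is to obtain Corollary \ref{corl-a} as a direct specialization of Theorem \ref{thm-a} at $\ell = 1$. Since the outer $i$-sum on the right-hand side of Theorem \ref{thm-a} then contains only the two terms $i = 0$ and $i = 1$, the whole task reduces to evaluating the prefactor $\ffnk{ccccc}{q}{x,-x}{x^2,-1}_{1}$ and simplifying each of those two terms individually.

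For the prefactor, I expect an immediate collapse to $1/2$: the numerator $(1-x)(1+x) = 1 - x^2$ cancels the $(1-x^2)$ appearing in the denominator from $(x^2;q)_1$, leaving only $1/(1-(-1)) = 1/2$. The $i = 0$ term should contribute trivially, since every length-zero $q$-shifted factorial equals $1$ and the rescaled factor $\frac{1-x^2 q^{2i-1}}{1-x^2 q^{-1}}$ equals $1$ at $i = 0$; what remains is exactly $\ffnk{ccccc}{q^3}{qx,q^{2}/x}{q,q^2}_{n}$.

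The $i = 1$ term requires a single line of cancellation. The weight $\frac{1-x^2 q}{1-x^2 q^{-1}}$ meets its reciprocal coming from the length-one ratio $\ffnk{ccccc}{q}{q^{-1},x^2q^{-1}}{q,x^2q}_{1} = \frac{(1-q^{-1})(1-x^2q^{-1})}{(1-q)(1-x^2q)}$, so the $x$-dependent pieces disappear and only the scalar $-q \cdot \frac{1-q^{-1}}{1-q}$ survives. Using the identity $1-q^{-1} = -q^{-1}(1-q)$, this scalar equals $+1$, and hence the $i = 1$ contribution becomes precisely $\ffnk{ccccc}{q^3}{q^{2}x,q/x}{q,q^2}_{n}$. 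Adding the two terms and multiplying by the prefactor $1/2$ yields the stated identity. There is no genuine obstacle here: the only minor subtlety is keeping the signs and $q$-powers straight when simplifying $1-q^{-1}$, after which the proof is purely formal manipulation of short $q$-Pochhammer ratios.
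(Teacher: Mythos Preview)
Your argument is correct and matches the paper's approach exactly: the paper simply presents Corollary~\ref{corl-a} as the $\ell=1$ specialization of Theorem~\ref{thm-a} without spelling out the details, and your computation of the prefactor and the $i=0,1$ terms is the routine verification of that specialization.
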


Performing the replacement $a\to q^{3x}$ in Theorem \ref{thm-a} and
then letting $q\to1$, we obtain the following equation.

\begin{prop}\label{prop-a}
For a nonnegative integer $\ell$ and a complex number $x$, there
holds
 \bnm
&&\:\xxqdn_3F_2\ffnk{cccc}{\frac{3}{4}}{3x,&1-\ell-3x,&-n}{&\frac{1}{2},&-3n}
=\fnk{ccccc}{3x}{6x}_{\ell}
\\&&\:\xxqdn\:\:=\:\:\sum_{i=0}^{\ell}(-1)^i\frac{6x+2i-1}{6x-1}\fnk{ccccc}{-\ell,6x-1}{1,6x+\ell}_i
\fnk{ccccc}{\frac{1+i}{3}+x,&\frac{2-i}{3}-x}{\frac{1}{3},&\frac{2}{3}}_{n}.
 \enm
\end{prop}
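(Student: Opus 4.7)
The plan is to derive Proposition \ref{prop-a} as the classical ($q\to 1$) limit of Theorem \ref{thm-a} after the substitution $x\mapsto q^{3x}$, where the $x$ on the right is reinterpreted as the new complex parameter of the proposition. The essential tools are the standard limits
\[
\lim_{q\to 1}\frac{(q^a;q^b)_n}{(1-q^b)^n}=\Big(\frac{a}{b}\Big)_n,\qquad
\lim_{q\to 1}(-q^a;q)_n=2^n,\qquad
\lim_{q\to 1}(-1;q)_n=2^n,
\]
valid for generic exponents.

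First, on the left-hand side of Theorem \ref{thm-a} the substitution produces a summand in which the three generic numerator factorials $(q^{-3n};q^3)_k$, $(q^{3x};q)_k$ and $(q^{1-\ell-3x};q)_k$ contribute $3^k(1-q)^{3k}(-n)_k(3x)_k(1-\ell-3x)_k$; the three generic denominator factorials $(q;q)_k$, $(q^{1/2};q)_k$ and $(q^{-3n};q)_k$ contribute $(1-q)^{3k}\,k!\,(1/2)_k(-3n)_k$; and the two sign factorials $(-q^{1-\ell};q)_k$ and $(-q^{1/2};q)_k$ contribute $4^k$. The three powers of $(1-q)^k$ balance, $q^k\to 1$, and the surviving factor $(3/4)^k$ reproduces the $_3F_2(3/4)$-series on the left-hand side of Proposition \ref{prop-a}.

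Second, on the right-hand side the outer coefficient $(x;q)_\ell(-x;q)_\ell/[(x^2;q)_\ell(-1;q)_\ell]$ tends, after substitution, to $(3x)_\ell/(6x)_\ell$ once the two factors $2^\ell$ coming from $(-q^{3x};q)_\ell$ and $(-1;q)_\ell$ cancel. Inside the finite $i$-sum, the prefactor $(1-x^2q^{2i-1})/(1-x^2q^{-1})$ tends to $(6x+2i-1)/(6x-1)$, $(-1)^iq^{\ell i}$ to $(-1)^i$, the inner $q$-Pochhammer ratio $(q^{-\ell};q)_i(x^2q^{-1};q)_i/[(q;q)_i(x^2q^\ell;q)_i]$ to $(-\ell)_i(6x-1)_i/[i!\,(6x+\ell)_i]$, and the base-$q^3$ block $(q^{1+i+3x};q^3)_n(q^{2-i-3x};q^3)_n/[(q;q^3)_n(q^2;q^3)_n]$ to $(\tfrac{1+i}{3}+x)_n(\tfrac{2-i}{3}-x)_n/[(1/3)_n(2/3)_n]$. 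Assembling these pieces yields exactly the claimed identity.

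The main obstacle is the careful bookkeeping of the $(1-q)$ factors, in particular verifying that the ``anti-$q$'' factors (those of the form $-q^a$ or $-1$) contribute only powers of $2$ and no power of $1-q$, so that the $2^k$'s and $2^\ell$'s cancel cleanly between numerator and denominator both on the left and within each summand on the right. One should also note that the substitution $x\mapsto q^{3x}$ inside a base-$q^3$ Pochhammer automatically produces the $x$-shift in the classical Pochhammer $(\tfrac{1+i}{3}+x)_n$, which is the mechanism by which the $\tfrac{1}{3}$-shifted arguments of Gosper's identity reappear after passage to the limit.
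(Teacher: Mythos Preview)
Your proposal is correct and follows exactly the route indicated in the paper: substitute $x\mapsto q^{3x}$ in Theorem~\ref{thm-a} and let $q\to 1$. The paper dispatches this in a single sentence, whereas you have (correctly) spelled out the bookkeeping of the $(1-q)$- and $2$-powers; nothing further is needed.
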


When $\ell=0$, Proposition \ref{prop-a} reduces to \eqref{gosper-a}.
Another concrete formula is expressed as follows.

\begin{corl}[$\ell=1$ in Proposition \ref{prop-a}]\label{corl-b}
\bnm
 \qquad_3F_2\ffnk{cccc}{\frac{3}{4}}{3x,&-3x,&-n}{&\frac{1}{2},&-3n}
 =\frac{1}{2}\fnk{ccccc}{\frac{1}{3}+x,&\frac{2}{3}-x}{\frac{1}{3},&\frac{2}{3}}_{n}
+\frac{1}{2}\fnk{ccccc}{\frac{2}{3}+x,&\frac{1}{3}-x}{\frac{1}{3},&\frac{2}{3}}_{n}.
\enm
\end{corl}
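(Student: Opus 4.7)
The plan is to derive Corollary \ref{corl-b} as a direct specialization of Proposition \ref{prop-a} at $\ell=1$, so no new machinery is required; the work is purely in the simplification of the finite sum on the right-hand side.

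First, I would substitute $\ell=1$ into Proposition \ref{prop-a}. The left-hand side immediately becomes
\[
{_3F_2}\ffnk{cccc}{\tfrac{3}{4}}{3x,&-3x,&-n}{&\tfrac{1}{2},&-3n},
\]
matching the statement of Corollary \ref{corl-b}. For the right-hand side, the prefactor collapses to $\fnk{ccccc}{3x}{6x}_1=\tfrac{3x}{6x}=\tfrac{1}{2}$, so everything hinges on evaluating the sum
\[
\sum_{i=0}^{1}(-1)^i\frac{6x+2i-1}{6x-1}\fnk{ccccc}{-1,6x-1}{1,6x+1}_i \fnk{ccccc}{\tfrac{1+i}{3}+x,&\tfrac{2-i}{3}-x}{\tfrac{1}{3},&\tfrac{2}{3}}_{n}.
\]

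Next I would compute the two terms separately. The $i=0$ term is simply $\fnk{ccccc}{\tfrac{1}{3}+x,\tfrac{2}{3}-x}{\tfrac{1}{3},\tfrac{2}{3}}_{n}$. For $i=1$, the Pochhammer ratio is $\tfrac{(-1)_1(6x-1)_1}{(1)_1(6x+1)_1}=\tfrac{-(6x-1)}{6x+1}$, and together with the sign $(-1)^1$ and the factor $\tfrac{6x+1}{6x-1}$, all numerical coefficients cancel, leaving $+\fnk{ccccc}{\tfrac{2}{3}+x,\tfrac{1}{3}-x}{\tfrac{1}{3},\tfrac{2}{3}}_{n}$.

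Finally, I would combine this with the overall factor $\tfrac{1}{2}$ from the prefactor to obtain the claimed identity. There is no genuine obstacle here: the main (and only) care needed is to verify the cancellation of signs and factors in the $i=1$ summand; everything else is a substitution. Hence the corollary follows at once from Proposition \ref{prop-a}.
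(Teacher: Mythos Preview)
Your proposal is correct and matches the paper's approach exactly: the paper states Corollary~\ref{corl-b} simply as the case $\ell=1$ of Proposition~\ref{prop-a} without further argument, and your computation of the prefactor $\tfrac{1}{2}$ and the cancellation in the $i=1$ summand is precisely the routine verification that specialization requires.
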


\begin{thm}\label{thm-b}
For a nonnegative integer $\ell$ and a complex number $x$, there
holds
 \bnm
&&\xqdn\sum_{k=0}^n(q^{-3n};q^3)_k\ffnk{ccccc}{q}{x,q^{2-\ell}/x}
{q,-q,q^{3/2},-q^{3/2-\ell},q^{-1-3n}}_kq^k=\ffnk{ccccc}{q}{xq^{-1},-xq^{-1/2}}{x^2q^{-1},-q^{-1/2}}_{\ell}
\\&&\xqdn\:\:\times\:\sum_{i=0}^{\ell}(-1)^iq^{(\ell-\frac{1}{2}) i}\frac{1-x^2q^{2i-2}}{1-x^2q^{-2}}
\ffnk{ccccc}{q}{q^{-\ell},x,x^2q^{-2}}{q,xq^{-1},x^2q^{\ell-1}}_i\ffnk{ccccc}{q^3}{q^{2+i}x,q^{4-i}/x}{q^2,q^4}_{n}.
 \enm
\end{thm}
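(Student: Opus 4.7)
The plan is to parallel the proof of Theorem \ref{thm-a} step by step, replacing the base $q$-Gosper identity \eqref{q-gosper-a} by \eqref{q-gosper-b} at the final step. The appropriate specialization of \eqref{terminating-65} is
\[a=x^2q^{-2}, \quad b=xq^k, \quad c=-xq^{-1/2},\]
chosen so that $\sqrt{a}=xq^{-1}$ produces the very-well-poised factor $(1-x^2q^{2i-2})/(1-x^2q^{-2})$ appearing on the right-hand side of Theorem \ref{thm-b}, and the coincidence $c=qa/c=-xq^{-1/2}$ forces the standard second cancellation. Under this specialization the right-hand side of \eqref{terminating-65} evaluates to $\ffnk{ccccc}{q}{x^2q^{-1},-q^{-1/2-k}}{xq^{-1-k},-xq^{-1/2}}_\ell$, whose $k=0$ reciprocal is precisely the prefactor $\ffnk{ccccc}{q}{xq^{-1},-xq^{-1/2}}{x^2q^{-1},-q^{-1/2}}_\ell$ stated in Theorem \ref{thm-b}; this match already serves as a strong sanity check on the choice of $a,b,c$.

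Dividing \eqref{terminating-65} by its right-hand side, applying the inversion formula $(a;q)_n=(-a)^nq^{\binom{n}{2}}(q^{1-n}/a;q)_n$ to $(xq^{-1-k};q)_\ell$ and $(-q^{-1/2-k};q)_\ell$, and then the splitting $(a;q)_{k+\ell}=(a;q)_\ell(aq^\ell;q)_k=(a;q)_k(aq^k;q)_\ell$ with $a=q^{2-\ell}/x$ and $a=-q^{3/2-\ell}$, I would rewrite the identity in the form $(\text{normalizer})\cdot\sum_{i=0}^\ell(\cdots)=1$, a direct analogue of \eqref{series}. The upshot is that, after multiplying the $k$-th summand on the LHS of Theorem \ref{thm-b} by this $1$ and interchanging the order of summation, the $\ell$-shifted factors $(q^{2-\ell}/x;q)_k$ and $(-q^{3/2-\ell};q)_k$ are converted into $(q^2/x;q)_k$ and $(-q^{3/2};q)_k$. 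The telescoping identity $(x;q)_k(xq^k;q)_i=(x;q)_i(xq^i;q)_k$, together with its variant that combines the remaining $k$-dependent entries $(xq^{-1-k};q)_i$, $q^{-ki}$, and $(q^2/x;q)_k$ into $(q^{2-i}/x;q)_k$ times $i$-only factors, reduces the inner $k$-sum for each fixed $i$ to
\[\sum_{k=0}^n(q^{-3n};q^3)_k\ffnk{ccccc}{q}{xq^i,q^{2-i}/x}{q,-q,q^{3/2},-q^{3/2},q^{-1-3n}}_kq^k,\]
which is precisely the LHS of \eqref{q-gosper-b} with $x$ replaced by $xq^i$. Applying \eqref{q-gosper-b} evaluates this inner sum as $\ffnk{ccccc}{q^3}{q^{2+i}x,q^{4-i}/x}{q^2,q^4}_n$.

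The last task is to verify that the residual $i$-dependent factors assemble into the stated weight $(-1)^iq^{(\ell-1/2)i}$ and the shifted-factorial ratio $\ffnk{ccccc}{q}{q^{-\ell},x,x^2q^{-2}}{q,xq^{-1},x^2q^{\ell-1}}_i$. The factor $(-q^{\ell-1/2-k})^i=(-1)^iq^{(\ell-1/2)i}q^{-ki}$ arising from the specialization $z=q^{1+\ell}a/(bc)$ contributes a $q^{-ki}$ that is absorbed into the telescoping above, leaving the clean power $(-1)^iq^{(\ell-1/2)i}$; the shifted-factorial ratio is then simply what remains of the $_6\phi_5$-summand once all $k$-dependent entries have been transferred into the inner sum. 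The main obstacle is this last bookkeeping step: it is entirely parallel to the one in the proof of Theorem \ref{thm-a}, but the proliferation of half-integer powers of $q$ coming from $(-q^{-1/2-k};q)_\ell$ and $(-xq^{-1/2};q)_\ell$ makes it noticeably more delicate, and one must double-check that every stray factor of $q^{\pm 1/2}$ lands in the correct place.
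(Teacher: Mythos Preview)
Your proposal is correct and follows essentially the same route as the paper: the paper likewise specializes \eqref{terminating-65} with $a=x^2q^{-2}$, $b=xq^{k}$, $c=-xq^{-1/2}$, rewrites the result as an expression equal to $1$, multiplies the left-hand side by it, interchanges the order of summation, and then evaluates the resulting inner $k$-sum by \eqref{q-gosper-b}. The bookkeeping concerns you flag are real but routine, and the paper handles them exactly as you outline.
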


\begin{proof}
 Taking $a=x^2q^{-2}$, $b=xq^{k}$ and $c=-xq^{-1/2}$ in \eqref{terminating-65}, we gain the equation:
 \bnm
&&\xqdn\ffnk{ccccc}{q}{q^{2-\ell}/x,-q^{3/2-\ell}/x}{q^{2-\ell}/x^2,-q^{3/2-\ell+k}}_{\ell}
 \sum_{i=0}^{\ell}(-1)^iq^{(\ell-\frac{1}{2}) i}\frac{1-x^2q^{2i-2}}{1-x^2q^{-2}}
\ffnk{ccccc}{q}{q^{-\ell},xq^k,x^2q^{-2}}{q,xq^{-1},x^2q^{\ell-1}}_i
 \\&&\xqdn\times\:
\ffnk{ccccc}{q}{q^{2-\ell+k}/x}{q^{2-\ell}/x}_{\ell-i}=1.
 \enm
 Then we can proceed as follows:
  \bnm
&&\xqdn\sum_{k=0}^n(q^{-3n};q^3)_k\ffnk{ccccc}{q}{x,q^{2-\ell}/x}
{q,-q,q^{3/2},-q^{3/2-\ell},q^{-1-3n}}_kq^k
\\&&\xqdn\:\:=\:\sum_{k=0}^n(q^{-3n};q^3)_k\ffnk{ccccc}{q}{x,q^{2-\ell}/x}
{q,-q,q^{3/2},-q^{3/2-\ell},q^{-1-3n}}_kq^k
\ffnk{ccccc}{q}{q^{2-\ell}/x,-q^{3/2-\ell}/x}{q^{2-\ell}/x^2,-q^{3/2-\ell+k}}_{\ell}
\\&&\xqdn\:\:\times\:
\sum_{i=0}^{\ell}(-1)^iq^{(\ell-\frac{1}{2})
i}\frac{1-x^2q^{2i-2}}{1-x^2q^{-2}}
\ffnk{ccccc}{q}{q^{-\ell},xq^k,x^2q^{-2}}{q,xq^{-1},x^2q^{\ell-1}}_i
\ffnk{ccccc}{q}{q^{2-\ell+k}/x}{q^{2-\ell}/x}_{\ell-i}.
 \enm
Interchange the summation order for the last double sum to achieve
  \bnm
&&\xxqdn\sum_{k=0}^n(q^{-3n};q^3)_k\ffnk{ccccc}{q}{x,q^{2-\ell}/x}
{q,-q,q^{3/2},-q^{3/2-\ell},q^{-1-3n}}_kq^k
\\&&\xxqdn\:\:=\ffnk{ccccc}{q}{q^{2-\ell}/x,-q^{3/2-\ell}/x}{q^{2-\ell}/x^2,-q^{3/2-\ell}}_{\ell}
\sum_{i=0}^{\ell}(-1)^iq^{(\ell-\frac{1}{2})i}\frac{1-x^2q^{2i-2}}{1-x^2q^{-2}}
\ffnk{ccccc}{q}{q^{-\ell},x,x^2q^{-2}}{q,xq^{-1},x^2q^{\ell-1}}_i
\\&&\xxqdn\:\:\times\:
\sum_{k=0}^n(q^{-3n};q^3)_k\ffnk{ccccc}{q}{xq^i,q^{2-i}/x}
{q,-q,q^{3/2},-q^{3/2},q^{-1-3n}}_kq^k.
 \enm
 Evaluating the series on the last line by
\eqref{q-gosper-b}, we attain Theorem \ref{thm-b} to finish the
proof.
\end{proof}

When $\ell=0$, Theorem \ref{thm-b} reduces to \eqref{q-gosper-b}.
Another concrete formula is displayed as follows.

\begin{corl}[$\ell=1$ in Theorem \ref{thm-b}]\label{corl-c}
 \bnm
&&\xqdn\sum_{k=0}^n(q^{-3n};q^3)_k\ffnk{ccccc}{q}{x,q/x}
{q,-q,q^{3/2},-q^{1/2},q^{-1-3n}}_kq^k
\\&&\xqdn\:\:=
\frac{1-x}{(1+q^{1/2})(1-xq^{-1/2})}\ffnk{ccccc}{q^3}{q^3x,q^{3}/x}{q^2,q^4}_{n}
\\&&\xqdn\:\:+\:
\frac{1-xq^{-1}}{(1+q^{-1/2})(1-xq^{-1/2})}\ffnk{ccccc}{q^3}{q^2x,q^{4}/x}{q^2,q^4}_{n}.
 \enm
\end{corl}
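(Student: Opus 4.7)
The plan is simply to specialize $\ell=1$ in Theorem~\ref{thm-b} and to simplify the resulting two-term sum on the right-hand side; no new series manipulation is required.

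First I would reduce the outer prefactor. At $\ell=1$, the factor $\ffnk{ccccc}{q}{xq^{-1},-xq^{-1/2}}{x^2q^{-1},-q^{-1/2}}_{1}$ equals $(1-xq^{-1})(1+xq^{-1/2})/[(1-x^2q^{-1})(1+q^{-1/2})]$. Using the factorization $1-x^2q^{-1}=(1-xq^{-1/2})(1+xq^{-1/2})$, the factor $1+xq^{-1/2}$ cancels and the prefactor simplifies to $(1-xq^{-1})/[(1+q^{-1/2})(1-xq^{-1/2})]$.

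Next I would write out the inner sum, which has only the indices $i=0$ and $i=1$. The $i=0$ term is just $\ffnk{ccccc}{q^3}{q^2x,q^4/x}{q^2,q^4}_{n}$. For $i=1$, the expansion of $\ffnk{ccccc}{q}{q^{-1},x,x^2q^{-2}}{q,xq^{-1},x^2}_{1}$ as $(1-q^{-1})(1-x)(1-x^2q^{-2})/[(1-q)(1-xq^{-1})(1-x^2)]$ produces cancellations of both $1-x^2q^{-2}$ and $1-x^2$ against the ratio $(1-x^2)/(1-x^2q^{-2})$ coming from the $(1-x^2q^{2i-2})/(1-x^2q^{-2})$ normalization, while the elementary identity $(1-q^{-1})/(1-q)=-q^{-1}$ together with the $-q^{1/2}$ coming from $(-1)^i q^{(\ell-1/2)i}$ combine to give the net coefficient $q^{-1/2}(1-x)/(1-xq^{-1})$ multiplying $\ffnk{ccccc}{q^3}{q^3x,q^3/x}{q^2,q^4}_{n}$.

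Finally I would multiply each contribution by the cleaned-up prefactor. The $i=0$ term immediately gives the second summand of the corollary. For $i=1$ the factor $1-xq^{-1}$ cancels, and the elementary identity $q^{-1/2}/(1+q^{-1/2})=1/(1+q^{1/2})$ delivers the first summand. The only genuine hazard in the argument is keeping track of half-integer $q$-powers and of the chain of cancellations involving $1-x^2q^{-2}$ and $1+xq^{-1/2}$; there is no other obstacle.
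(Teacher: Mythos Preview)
Your proposal is correct and matches the paper's approach exactly: the corollary is stated simply as the specialization $\ell=1$ of Theorem~\ref{thm-b}, and your computation carries out precisely that substitution with the expected elementary cancellations.
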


Employing the substitution $a\to q^{3x}$ in Theorem \ref{thm-b} and
then letting $q\to1$, we obtain the following equation.

\begin{prop}\label{prop-b}
For a nonnegative integer $\ell$ and a complex number $x$, there
holds
 \bnm
&&\:\xxqdn_3F_2\ffnk{cccc}{\frac{3}{4}}{3x,&2-\ell-3x,&-n}{&\frac{3}{2},&-1-3n}
=\fnk{ccccc}{3x-1}{6x-1}_{\ell}
\\&&\:\xxqdn\:\:\times\:\:\sum_{i=0}^{\ell}(-1)^i\bigg(\frac{3x+i-1}{3x-1}\bigg)^2\fnk{ccccc}{-\ell,6x-2}{1,6x+\ell-1}_i
\fnk{ccccc}{\frac{2+i}{3}+x,&\frac{4-i}{3}-x}{\frac{2}{3},&\frac{4}{3}}_{n}.
 \enm
\end{prop}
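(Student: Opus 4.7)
The plan is to derive Proposition~\ref{prop-b} as the $q\to 1$ limit of Theorem~\ref{thm-b}, exactly in parallel with the passage from Theorem~\ref{thm-a} to Proposition~\ref{prop-a} already executed in the paper. First I would perform the substitution $x\to q^{3x}$ in Theorem~\ref{thm-b}, so that every Pochhammer symbol in sight becomes of the form $(q^{a(x,\ell,i,n,k)};q)_{\cdot}$ or $(q^{a(x,\ell,i,n)};q^3)_{\cdot}$, together with finitely many ``$-q^{\cdot}$'' factors. Then, using the standard limits $(q^a;q)_k\to(1-q)^k(a)_k$ and $(q^a;q^3)_k\to(1-q)^k\,3^k(a/3)_k$, I would let $q\to 1$ in both members and compare.

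On the left, the factors $(-q;q)_k$, $(-q^{3/2};q)_k$ and $(-q^{3/2-\ell};q)_k$ each contribute $2^k$ in the limit, and their products combine with the $q^k$ weight to produce the argument $\tfrac34$ (the ratio $3^k/4^k$ coming from $(1-q^3)^k/((1-q)^k\cdot 2^k\cdot 2^k)$). A straightforward power count in $(1-q)$ cancels the divergent contributions, and the series collapses to the $_3F_2(\tfrac34)$ appearing in Proposition~\ref{prop-b}. On the right, the prefactor $\ffnk{ccccc}{q}{xq^{-1},-xq^{-1/2}}{x^2q^{-1},-q^{-1/2}}_{\ell}$ limits to $(3x-1)_\ell/(6x-1)_\ell$, because the two ``$-q^{\cdot}$'' Pochhammers each yield $2^\ell$ and cancel; the $q$-dilation $q^{(\ell-\frac{1}{2})i}$ tends to $1$; and the $q^3$-shifted factorials convert, via $(q^a;q^3)_n\to(1-q)^n\,3^n(a/3)_n$, into $\fnk{ccccc}{\frac{2+i}{3}+x,&\frac{4-i}{3}-x}{\frac{2}{3},&\frac{4}{3}}_n$.

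The only genuinely delicate point is the appearance of the \emph{squared} factor $\bigl(\tfrac{3x+i-1}{3x-1}\bigr)^2$, which enters through two independent mechanisms: the very-well-poised factor $\tfrac{1-x^2q^{2i-2}}{1-x^2q^{-2}}$ limits to $\tfrac{3x+i-1}{3x-1}$, and the ratio $(q^{3x};q)_i/(q^{3x-1};q)_i$ telescopes to $\tfrac{3x+i-1}{3x-1}$ as well. Once this double contribution is identified, the remaining Pochhammers assemble cleanly into $\fnk{ccccc}{-\ell,6x-2}{1,6x+\ell-1}_i$, and combining all pieces completes the proof. The main obstacle is therefore purely bookkeeping --- carefully balancing the powers of $(1-q)$ on each side and tracking the two distinct sources of the squared factor --- rather than anything conceptually new beyond Theorem~\ref{thm-b} itself.
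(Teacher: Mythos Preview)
Your approach is exactly the paper's: substitute $x\to q^{3x}$ in Theorem~\ref{thm-b} and let $q\to 1$, and your detailed tracking of the limits---in particular the identification of the two independent sources of the squared factor $\bigl(\tfrac{3x+i-1}{3x-1}\bigr)^2$, one from the very-well-poised factor and one from $(q^{3x};q)_i/(q^{3x-1};q)_i$---is correct. One small slip: the denominator of Theorem~\ref{thm-b} carries only \emph{two} factors of the form $(-q^{\cdot};q)_k$, namely $(-q;q)_k$ and $(-q^{3/2-\ell};q)_k$ (the term $q^{3/2}$ has no minus sign), so the $4^k$ in your $3^k/4^k$ comes from those two, not three.
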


When $\ell=0$, Proposition \ref{prop-b} reduces to \eqref{gosper-b}.
Another concrete formula is expressed as follows.

\begin{corl}[$\ell=1$ in Proposition \ref{prop-b}]\label{corl-d}
\bnm
 &&3F_2\ffnk{cccc}{\frac{3}{4}}{3x,&1-3x,&-n}{&\frac{3}{2},&-1-3n}\\
 &&\:\:=\:\frac{3x}{6x-1}\fnk{ccccc}{1+x,&1-x}{\frac{2}{3},&\frac{4}{3}}_{n}
 +\frac{3x-1}{6x-1}\fnk{ccccc}{\frac{2}{3}+x,&\frac{4}{3}-x}{\frac{2}{3},&\frac{4}{3}}_{n}.
\enm
\end{corl}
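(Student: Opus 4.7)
The plan is to obtain Corollary \ref{corl-d} as a direct specialization of Proposition \ref{prop-b} at $\ell = 1$, and then to simplify the resulting two-term right-hand side to the advertised closed form. No new identity is needed; the task is purely bookkeeping, but the $i=1$ term has several cancelling factors that require careful tracking.

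First I would substitute $\ell = 1$ into Proposition \ref{prop-b}. The left-hand side immediately becomes
\[
  _3F_2\ffnk{cccc}{\frac{3}{4}}{3x,&1-3x,&-n}{&\frac{3}{2},&-1-3n},
\]
matching the left-hand side of Corollary \ref{corl-d}. The external prefactor collapses to $\fnk{ccccc}{3x-1}{6x-1}_{1}=\frac{3x-1}{6x-1}$, and the sum on the right reduces to just the two terms $i=0$ and $i=1$.

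Next I would evaluate the two summands. For $i=0$, the signed factor, the square $\bigl(\tfrac{3x-1}{3x-1}\bigr)^2$, and the empty product $\fnk{ccccc}{-1,6x-2}{1,6x}_0$ all equal $1$, leaving the shifted factorial block $\fnk{ccccc}{\frac{2}{3}+x,&\frac{4}{3}-x}{\frac{2}{3},&\frac{4}{3}}_{n}$. For $i=1$ I need
\[
  \fnk{ccccc}{-1,6x-2}{1,6x}_1 \;=\; \frac{(-1)(6x-2)}{1\cdot 6x} \;=\; \frac{1-3x}{3x},
\]
so the full coefficient of $\fnk{ccccc}{1+x,&1-x}{\frac{2}{3},&\frac{4}{3}}_{n}$ becomes
\[
  (-1)\,\Bigl(\frac{3x}{3x-1}\Bigr)^{2}\cdot\frac{1-3x}{3x}
  \;=\;\frac{3x}{3x-1}.
\]
Multiplying each of the two summands by the prefactor $\frac{3x-1}{6x-1}$ then produces $\frac{3x-1}{6x-1}\fnk{ccccc}{\frac{2}{3}+x,&\frac{4}{3}-x}{\frac{2}{3},&\frac{4}{3}}_{n}$ and $\frac{3x}{6x-1}\fnk{ccccc}{1+x,&1-x}{\frac{2}{3},&\frac{4}{3}}_{n}$, respectively, which is exactly the right-hand side of Corollary \ref{corl-d}.

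The only step with any chance of error is the $i=1$ computation, where the $(3x-1)$ from the prefactor, the $(3x-1)^2$ from the square, the $(1-3x)$ from the shifted factorial quotient, and the overall sign must be combined correctly; the ``main obstacle,'' such as it is, lies in verifying that these factors telescope to the stated $\frac{3x}{6x-1}$ coefficient with the correct (positive) sign.
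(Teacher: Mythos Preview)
Your proposal is correct and follows exactly the paper's approach: the corollary is stated as the $\ell=1$ specialization of Proposition~\ref{prop-b}, with no further argument, and your verification of the two summands (including the sign and cancellation in the $i=1$ term) is accurate.
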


\begin{prop}\label{prop-c}
For a nonnegative integer $\ell$ and a complex number $x$, there
holds
 \bnm
&&\:\xxqdn_3F_2\ffnk{cccc}{\frac{3}{4}}{3x,&2-\ell-3x,&-n}{&\frac{3}{2}-\ell,&-1-3n}
=\fnk{ccccc}{3x-1,3x-\frac{1}{2}}{6x-1,-\frac{1}{2}}_{\ell}
\\&&\:\xxqdn\:\:\times\:\:\sum_{i=0}^{\ell}\bigg(\frac{3x+i-1}{3x-1}\bigg)^2\fnk{ccccc}{-\ell,6x-2}{1,6x+\ell-1}_i
\fnk{ccccc}{\frac{2+i}{3}+x,&\frac{4-i}{3}-x}{\frac{2}{3},&\frac{4}{3}}_{n}.
 \enm
\end{prop}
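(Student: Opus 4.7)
The plan is to mirror the derivation of Proposition \ref{prop-b} from Theorem \ref{thm-b}, starting from a companion $q$-theorem tailored to the modified lower parameter $\frac{3}{2}-\ell$. Concretely, I would first establish a $q$-series identity whose left-hand side is
\[
\sum_{k=0}^n (q^{-3n};q^3)_k \frac{(x;q)_k\,(q^{2-\ell}/x;q)_k}{(q;q)_k(-q;q)_k(q^{3/2-\ell};q)_k(-q^{3/2};q)_k(q^{-1-3n};q)_k}\, q^k,
\]
obtained from the LHS of Theorem \ref{thm-b} by swapping the denominator factors $q^{3/2}$ and $-q^{3/2-\ell}$ for $q^{3/2-\ell}$ and $-q^{3/2}$, respectively. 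Under $x\to q^{3x}$ and $q\to 1$, this swap converts the lower parameter $\frac{3}{2}$ of Proposition \ref{prop-b} into $\frac{3}{2}-\ell$ of Proposition \ref{prop-c}, since $(q^{3/2-\ell};q)_k/(1-q)^k\to(\tfrac{3}{2}-\ell)_k$ while the surviving $(-q^{3/2};q)_k\to 2^k$ cancels symmetrically against the $2^k$ arising from $(-q;q)_k$.

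The proof of this $q$-analogue then follows the template of Theorem \ref{thm-b}, specializing \eqref{terminating-65} at $a=x^2q^{-2}$, $b=xq^k$, and $c=xq^{-1/2}$ (positive, in place of Theorem \ref{thm-b}'s $c=-xq^{-1/2}$). With this choice one computes $q^{1+\ell}a/(bc)=q^{\ell-1/2-k}$, which is signless, so no $(-1)^i$ appears in the $i$-summand. The resulting $_6\phi_5$ evaluation, once inverted, yields a ``$\cdots\sum_{i=0}^\ell(\cdots)=1$'' identity whose $k$-dependence resides only in the factor $(q^{3/2-\ell+k};q)_\ell$ of the prefactor and in $(xq^k;q)_i$ of the summand. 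Inserting this into the LHS above, interchanging the summation order in $i$ and $k$, and telescoping via $(a;q)_k(aq^k;q)_\ell=(a;q)_\ell(aq^\ell;q)_k$ for $a=q^{3/2-\ell}$ and $a=q^{2-\ell}/x$, reduces the inner $k$-sum to the LHS of \eqref{q-gosper-b} with parameter shifts $x\to xq^i$ and $q^{2-\ell}/x\to q^{2-i}/x$. Evaluation by \eqref{q-gosper-b} and the subsequent limit $q\to 1$ then produce Proposition \ref{prop-c}, with the squared factor $\left(\frac{3x+i-1}{3x-1}\right)^2$ arising from the standard Pochhammer-ratio analysis exactly as in the passage from Theorem \ref{thm-b} to Proposition \ref{prop-b}.

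The principal technical challenge lies in the bookkeeping: one must verify that, after the summation swap and telescoping, the surviving $k$-independent prefactor matches $\left[\frac{q}{xq^{-1},xq^{-1/2}}{x^2q^{-1},q^{-1/2}}\right]_\ell$, which tends under $x\to q^{3x}$, $q\to 1$ to the required $\frac{(3x-1)_\ell(3x-\tfrac{1}{2})_\ell}{(6x-1)_\ell(-\tfrac{1}{2})_\ell}$. This calls for several applications of the reflection $(q^c;q)_\ell=(-q^c)^\ell q^{\binom{\ell}{2}}(q^{1-\ell-c};q)_\ell$ together with the limit identities $(2-\ell-3x)_\ell=(-1)^\ell(3x-1)_\ell$ and $(\tfrac{3}{2}-\ell)_\ell=(-1)^\ell(-\tfrac{1}{2})_\ell$; the various factors of $(-1)^\ell$ cancel in pairs, which is precisely why the final prefactor is signless. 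Once this bookkeeping is carried out, the remainder of the argument is strictly parallel to the proofs of Theorems \ref{thm-a} and \ref{thm-b}.
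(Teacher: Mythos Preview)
Your proposal is correct and reaches the same intermediate $q$-identity that the paper uses, namely
\[
\sum_{k=0}^n (q^{-3n};q^3)_k\,\ffnk{ccccc}{q}{x,\,q^{2-\ell}/x}{q,\,-q,\,q^{3/2-\ell},\,-q^{3/2},\,q^{-1-3n}}_k q^k
=\ffnk{ccccc}{q}{xq^{-1},\,xq^{-1/2}}{x^2q^{-1},\,q^{-1/2}}_\ell\sum_{i=0}^\ell(\cdots),
\]
followed by $x\to q^{3x}$, $q\to 1$. The difference is in how this companion identity is obtained. The paper does not re-run the series-rearrangement argument with $c=xq^{-1/2}$ as you propose; instead it simply substitutes $q\to q^2$ in Theorem~\ref{thm-b} and then $q\to -q^{1/2}$, which swaps the roles of $q^{3/2}$ and $-q^{3/2-\ell}$ (and of $-xq^{-1/2},-q^{-1/2}$ with $xq^{-1/2},q^{-1/2}$) in one stroke, killing the $(-1)^i$ automatically. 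Your direct $_6\phi_5$ specialization with the sign of $c$ flipped is a perfectly valid alternative and makes the origin of the signless summand more transparent, but it repeats the entire interchange-of-summation computation that the paper's two-step substitution bypasses.
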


\begin{proof}
 Performing the replacement $q\to q^{2}$ in Theorem \ref{thm-b}, we
 have
 \bnm
&&\xqdn\sum_{k=0}^n(q^{-6n};q^6)_k\ffnk{ccccc}{q^2}{x,q^{4-2\ell}/x}
{q^2,-q^2,q^{3},-q^{3-2\ell},q^{-2-6n}}_kq^{2k}=\ffnk{ccccc}{q^2}{xq^{-2},-xq^{-1}}{x^2q^{-2},-q^{-1}}_{\ell}
\\&&\xqdn\:\:\times\:\sum_{i=0}^{\ell}(-1)^iq^{(2\ell-1) i}\frac{1-x^2q^{4i-4}}{1-x^2q^{-4}}
\ffnk{ccccc}{q^2}{q^{-2\ell},x,x^2q^{-4}}{q^2,xq^{-2},x^2q^{2\ell-2}}_i\ffnk{ccccc}{q^6}{q^{4+2i}x,q^{8-2i}/x}{q^4,q^8}_{n}.
 \enm
 Replace $q$ by $-q^{1/2}$ to gain
  \bnm
&&\xqdn\sum_{k=0}^n(q^{-3n};q^3)_k\ffnk{ccccc}{q}{x,q^{2-\ell}/x}
{q,-q,q^{3/2-\ell},-q^{3/2},q^{-1-3n}}_kq^k=\ffnk{ccccc}{q}{xq^{-1},xq^{-1/2}}{x^2q^{-1},q^{-1/2}}_{\ell}
\\&&\xqdn\:\:\times\:\sum_{i=0}^{\ell}q^{(\ell-\frac{1}{2}) i}\frac{1-x^2q^{2i-2}}{1-x^2q^{-2}}
\ffnk{ccccc}{q}{q^{-\ell},x,x^2q^{-2}}{q,xq^{-1},x^2q^{\ell-1}}_i\ffnk{ccccc}{q^3}{q^{2+i}x,q^{4-i}/x}{q^2,q^4}_{n}.
 \enm
Employing the substitution $a\to q^{3x}$ in the last equation and
then letting $q\to1$, we acquire Proposition \ref{prop-c}.
\end{proof}

When $\ell=0$, Proposition \ref{prop-c} also reduces to
\eqref{gosper-b}. Another concrete formula is expressed as follows.

\begin{corl}[$\ell=1$ in Proposition \ref{prop-c}]\label{corl-e}
\bnm
&&_3F_2\ffnk{cccc}{\frac{3}{4}}{3x,&1-3x,&-n}{&\frac{1}{2},&-1-3n}\\
 &&\:\:=\:3x\fnk{ccccc}{1+x,&1-x}{\frac{2}{3},&\frac{4}{3}}_{n}
+(1-3x)\fnk{ccccc}{\frac{2}{3}+x,&\frac{4}{3}-x}{\frac{2}{3},&\frac{4}{3}}_{n}.
\enm
\end{corl}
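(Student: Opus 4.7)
The plan is to obtain Corollary \ref{corl-e} as the direct specialization $\ell=1$ of Proposition \ref{prop-c}: since the outer sum truncates to just the two terms $i=0$ and $i=1$, no new identity is needed beyond elementary arithmetic on the prefactors.

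First I would evaluate the overall prefactor. At $\ell=1$, $\fnk{ccccc}{3x-1,3x-\frac{1}{2}}{6x-1,-\frac{1}{2}}_{1}=\frac{(3x-1)(3x-\frac{1}{2})}{(6x-1)(-\frac{1}{2})}$; writing $3x-\frac{1}{2}=\frac{1}{2}(6x-1)$ allows the $6x-1$ factors to cancel, and the two factors $\frac{1}{2}$ combine with the minus sign in the denominator to leave $-(3x-1)=1-3x$.

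Next I would compute the $i=0$ and $i=1$ contributions of the sum. The $i=0$ summand reduces to $\fnk{ccccc}{\frac{2}{3}+x,\frac{4}{3}-x}{\frac{2}{3},\frac{4}{3}}_{n}$ because the squared ratio and the Pochhammer ratio are both equal to $1$. For $i=1$, the squared ratio is $\bigl(\frac{3x}{3x-1}\bigr)^{2}$ while the Pochhammer ratio is $\frac{(-1)(6x-2)}{1\cdot 6x}=-\frac{3x-1}{3x}$; multiplying these collapses the combined $i=1$ coefficient to $-\frac{3x}{3x-1}$, attached to $\fnk{ccccc}{1+x,1-x}{\frac{2}{3},\frac{4}{3}}_{n}$.

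Finally, multiplying through by the prefactor $-(3x-1)$ produces $(1-3x)\fnk{ccccc}{\frac{2}{3}+x,\frac{4}{3}-x}{\frac{2}{3},\frac{4}{3}}_{n}+3x\,\fnk{ccccc}{1+x,1-x}{\frac{2}{3},\frac{4}{3}}_{n}$, which, after rearranging the two summands, is precisely the right-hand side of Corollary \ref{corl-e}; the left-hand side is obtained by directly substituting $\ell=1$ into the $_3F_2(\frac{3}{4})$ appearing on the left of Proposition \ref{prop-c}. There is no substantive obstacle: the one point requiring attention is correct bookkeeping of the three competing minus signs (from the prefactor, from $(-1)_{1}$, and from the simplification of the $i=1$ coefficient), but once these combine correctly the corollary is immediate.
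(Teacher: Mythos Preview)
Your proposal is correct and is exactly the approach the paper takes: the corollary is stated as the case $\ell=1$ of Proposition~\ref{prop-c} with no separate proof, so the verification amounts precisely to the elementary simplification of the prefactor and the $i=0,1$ terms that you carried out. Your bookkeeping of the signs and the cancellation of $6x-1$ via $3x-\tfrac12=\tfrac12(6x-1)$ is accurate, and the result matches the stated right-hand side.
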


\section{Several related results}
\begin{thm}\label{thm-c}
For a nonnegative integer $\ell$ and a complex number $x$, there
holds
 \bnm
&&\xqdn\sum_{k=0}^n(q^{-3n};q^3)_k\ffnk{ccccc}{q}{x,q^{-\ell}/x}
{q,-1,q^{1/2-\ell},-q^{1/2},q^{-3n}}_kq^k
\\&&\xqdn\:\:=
 \ffnk{ccccc}{q}{qx,q^{1/2}x}{qx^2,q^{1/2}}_{\ell}\sum_{i=0}^{\ell}q^{(\ell+\frac{1}{2})i}\frac{1+xq^{i}}{2(1+x)}
\ffnk{ccccc}{q}{q^{-\ell},x^2}{q,x^2q^{\ell+1}}_i
\\&&\xqdn\:\:\times\:
\Bigg\{\ffnk{ccccc}{q^3}{q^{1+i}x,q^{2-i}/x}{q,q^2}_{n}+\ffnk{ccccc}{q^3}{q^{2+i}x,q^{1-i}/x}{q,q^2}_{n}\Bigg\}.
 \enm
\end{thm}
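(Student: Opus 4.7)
The strategy mirrors the proofs of Theorems~\ref{thm-a} and~\ref{thm-b}: combine the terminating $_6\phi_5$-series identity~\eqref{terminating-65} with a known $q$-Gosper-type evaluation and rearrange. The essential new point is that the ``base'' identity should be Corollary~\ref{corl-a} rather than~\eqref{q-gosper-a} itself, because the two $q^3$-shifted factorials inside the braces on the right of Theorem~\ref{thm-c} are precisely the two summands on the right of Corollary~\ref{corl-a} evaluated at $x\mapsto xq^i$.

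Concretely, I would specialize~\eqref{terminating-65} at $a=x^2$, $b=xq^k$, $c=x$. Since $c=\sqrt{a}$ and $qa/c=q\sqrt{a}$, two of the three $\pm\sqrt{a}$-telescopings collapse to $1$, leaving only the ratio $(-q\sqrt{a};q)_i/(-\sqrt{a};q)_i=(1+xq^i)/(1+x)$, which is exactly the factor appearing in Theorem~\ref{thm-c}. The resulting identity has the shape
\[
\sum_{i=0}^{\ell}\frac{1+xq^i}{1+x}\ffnk{ccccc}{q}{x^2,xq^k,q^{-\ell}}{q,xq^{1-k},x^2q^{1+\ell}}_{i}q^{(1+\ell-k)i}
=\ffnk{ccccc}{q}{qx^2,q^{1-k}}{xq^{1-k},qx}_{\ell},
\]
which, viewed as a rational identity in~$x$, I would rewrite as ``$1=(\mbox{prefactor in }k)\cdot\sum_{i=0}^{\ell}(\mbox{summand in }i,k)$'' and insert into the integrand of the left-hand side of Theorem~\ref{thm-c}. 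Interchanging the orders of summation leaves, for each fixed~$i$, an inner sum over~$k$.

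The $k$-integrand is then reshaped via the elementary $q$-shifted-factorial identities
\[
(x;q)_k(xq^k;q)_i=(x;q)_i(xq^i;q)_k,\qquad
(q^{-\ell}/x;q)_k(q^{k-\ell}/x;q)_{\ell-i}=(q^{-\ell}/x;q)_{\ell-i}(q^{-i}/x;q)_k,
\]
\[
(q^{1/2-\ell};q)_k(q^{1/2-\ell+k};q)_{\ell}=(q^{1/2-\ell};q)_{\ell}(q^{1/2};q)_k,
\]
together with the standard reflection $(a;q)_m=(-a)^m q^{\binom{m}{2}}(q^{1-m}/a;q)_m$, into the integrand of Corollary~\ref{corl-a} at $x\mapsto xq^i$ times an $i$-dependent but $k$-independent factor. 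Applying Corollary~\ref{corl-a} then delivers the two-term brace on the right of Theorem~\ref{thm-c}, and collecting the surviving prefactors produces both the global prefactor $\ffnk{ccccc}{q}{qx,q^{1/2}x}{qx^2,q^{1/2}}_{\ell}$ and the $i$-summand $q^{(\ell+1/2)i}\frac{1+xq^i}{2(1+x)}\ffnk{ccccc}{q}{q^{-\ell},x^2}{q,x^2q^{\ell+1}}_{i}$.

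The main obstacle will be the careful $q$-factorial bookkeeping: one must verify that the reflections generate precisely the power $q^{(\ell+1/2)i}$ and that the factor $(q^{1/2}x;q)_{\ell}/(q^{1/2};q)_{\ell}$ in the final prefactor emerges correctly from the combined effect of reflecting $(xq^{1-k+i};q)_{\ell-i}$ and rewriting $(q^{1/2-\ell};q)_k$ via the third displayed identity above. A minor subtlety is that the $_6\phi_5$-prefactor contains a $(q^{1-k};q)_\ell$ factor which vanishes for $k\in\{1,\ldots,\ell\}$, so the rearranged equation ``$1=\cdots$'' must first be established for generic~$x$ as an identity of rational functions and then extended by continuity.
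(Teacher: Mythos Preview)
Your overall plan---specialize the terminating ${}_6\phi_5$, insert it as ``$1=\cdots$'', interchange, and finish with Corollary~\ref{corl-a}---is exactly the paper's strategy. The gap is in the specific specialization: you take $c=x$, whereas the correct choice (and the one the paper uses) is $c=q^{1/2}x$.

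With $c=x$ the factor $qa/bc=q^{1-k}$, so the right side of \eqref{terminating-65} contains $(q^{1-k};q)_\ell$. This vanishes identically for $k=1,\dots,\ell$ (the term $1-q^{0}$ occurs), so the inverted relation ``$1=\text{prefactor}\cdot\sum_i$'' is undefined at those $k$. Your proposed remedy, continuity in $x$, cannot help: $(q^{1-k};q)_\ell$ does not involve $x$ at all. There is a second, structural mismatch: the third identity you display, $(q^{1/2-\ell};q)_k\,(q^{1/2-\ell+k};q)_\ell=(q^{1/2-\ell};q)_\ell\,(q^{1/2};q)_k$, is precisely what is needed to strip the shift from the denominator factor $(q^{1/2-\ell};q)_k$ of Theorem~\ref{thm-c}, but the factor $(q^{1/2-\ell+k};q)_\ell$ arises only when $qa/bc=q^{1/2-k}$, i.e.\ when $c=q^{1/2}x$. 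With your $c=x$ you instead produce $(q^{k-\ell};q)_\ell$ after reflection, which has no partner in the summand; the bookkeeping you outline therefore cannot close. The prefactor $(q^{1/2}x;q)_\ell/(q^{1/2};q)_\ell$ in the theorem similarly comes from $(qa/c;q)_\ell$ and $(qa/bc;q)_\ell$ with $c=q^{1/2}x$ and would be absent under your choice.

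The feature that drew you to $c=x$---the clean factor $(1+xq^i)/(1+x)$---still appears with $c=q^{1/2}x$: there the $c$- and $qa/c$-factors cancel (both equal $q^{1/2}x$), leaving the full very-well-poised ratio $\tfrac{1-x^2q^{2i}}{1-x^2}$, and after interchanging and using $(x;q)_k(xq^k;q)_i=(x;q)_i(xq^i;q)_k$ one picks up $(x;q)_i/(qx;q)_i=\tfrac{1-x}{1-xq^i}$, whose product with $\tfrac{1-x^2q^{2i}}{1-x^2}$ is exactly $\tfrac{1+xq^i}{1+x}$. So the target factor is recovered, but via the specialization that also makes the $k$-bookkeeping and the nonvanishing work.
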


\begin{proof}
Setting $a=x^2$, $b=xq^{k}$ and $c=q^{1/2}x$ in
\eqref{terminating-65}, we get the equation:
 \bnm
&&\ffnk{ccccc}{q}{q^{-\ell}/x,q^{1/2-\ell}/x}{q^{-\ell}/x^2,q^{1/2-\ell+k}}_{\ell}
 \sum_{i=0}^{\ell}q^{(\ell+\frac{1}{2})i}\frac{1-x^2q^{2i}}{1-x^2}
\ffnk{ccccc}{q}{q^{-\ell},xq^k,x^2}{q,qx,x^2q^{\ell+1}}_i
 \\&&\times\:
\ffnk{ccccc}{q}{q^{k-\ell}/x}{q^{-\ell}/x}_{\ell-i}=1.
 \enm
 Then there is the following relation:
  \bnm
&&\xqdn\sum_{k=0}^n(q^{-3n};q^3)_k\ffnk{ccccc}{q}{x,q^{-\ell}/x}
{q,-1,q^{1/2-\ell},-q^{1/2},q^{-3n}}_kq^k
\\&&\xqdn\:\:=\:\sum_{k=0}^n(q^{-3n};q^3)_k\ffnk{ccccc}{q}{x,q^{-\ell}/x}
{q,-1,q^{1/2-\ell},-q^{1/2},q^{-3n}}_kq^k
\ffnk{ccccc}{q}{q^{-\ell}/x,q^{1/2-\ell}/x}{q^{-\ell}/x^2,q^{1/2-\ell+k}}_{\ell}
\\&&\xqdn\:\:\times\:
\sum_{i=0}^{\ell}q^{(\ell+\frac{1}{2})i}\frac{1-x^2q^{2i}}{1-x^2}
\ffnk{ccccc}{q}{q^{-\ell},xq^k,x^2}{q,qx,x^2q^{\ell+1}}_i
\ffnk{ccccc}{q}{q^{k-\ell}/x}{q^{-\ell}/x}_{\ell-i}.
 \enm
Interchange the summation order for the last double sum to achieve
  \bnm
&&\xxqdn\sum_{k=0}^n(q^{-3n};q^3)_k\ffnk{ccccc}{q}{x,q^{-\ell}/x}
{q,-1,q^{1/2-\ell},-q^{1/2},q^{-3n}}_kq^k
\\&&\xxqdn\:\:=\ffnk{ccccc}{q}{q^{-\ell}/x,q^{1/2-\ell}/x}{q^{-\ell}/x^2,q^{1/2-\ell}}_{\ell}
\sum_{i=0}^{\ell}q^{(\ell+\frac{1}{2})i}\frac{1+xq^{i}}{1+x}
\ffnk{ccccc}{q}{q^{-\ell},x^2}{q,x^2q^{\ell+1}}_i
\\&&\xxqdn\:\:\times\:
\sum_{k=0}^n(q^{-3n};q^3)_k\ffnk{ccccc}{q}{xq^i,q^{-i}/x}
{q,-1,q^{1/2},-q^{1/2},q^{-3n}}_kq^k.
 \enm
 Calculating the series on the last line by
Corollary \ref{corl-a}, we attain Theorem \ref{thm-c} to complete
the proof.
\end{proof}

\begin{corl}[$\ell=1$ in Theorem \ref{thm-c}]\label{corl-f}
 \bnm
&&\xqdn\sum_{k=0}^n(q^{-3n};q^3)_k\ffnk{ccccc}{q}{x,1/qx}
{q,-1,q^{-1/2},-q^{1/2},q^{-3n}}_kq^k
\\&&\xqdn\:\:=
\frac{1-qx}{2(1-q^{1/2})(1+xq^{1/2})}\ffnk{ccccc}{q^3}{qx,q^{2}/x}{q,q^2}_{n}
 \\&&\xqdn\:\:+\:\frac{1}{2}\ffnk{ccccc}{q^3}{q^2x,q/x}{q,q^2}_{n}
 -\frac{q^{1/2}(1-x)}{2(1-q^{1/2})(1+xq^{1/2})}\ffnk{ccccc}{q^3}{q^3x,1/x}{q,q^2}_{n}.
 \enm
\end{corl}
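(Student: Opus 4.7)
The plan is to obtain Corollary \ref{corl-f} as a direct specialization of Theorem \ref{thm-c} at $\ell=1$, followed by bookkeeping. Setting $\ell=1$ turns the left-hand side verbatim into the left-hand side of the corollary, since $q^{-\ell}/x = 1/(qx)$ and $q^{1/2-\ell} = q^{-1/2}$. On the right, the external prefactor becomes $\frac{(1-qx)(1-q^{1/2}x)}{(1-qx^2)(1-q^{1/2})}$, and the inner summation reduces to the two terms $i=0$ and $i=1$.

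Next I would expand each of those two $i$-terms. The $i=0$ term contributes $\frac12\{A+B\}$, and the $i=1$ term contributes a rational multiple of $\{B+C\}$, where I abbreviate
\[
A=\ffnk{ccccc}{q^3}{qx,q^{2}/x}{q,q^2}_{n},\quad
B=\ffnk{ccccc}{q^3}{q^{2}x,q/x}{q,q^2}_{n},\quad
C=\ffnk{ccccc}{q^3}{q^{3}x,1/x}{q,q^2}_{n}.
\]
The coefficient at $i=1$ simplifies because $\frac{(1-q^{-1})(1-x^2)}{(1-q)(1-x^2q^2)}=\frac{-q^{-1}(1-x)(1+x)}{(1-xq)(1+xq)}$, so after multiplying by $q^{3/2}\frac{1+xq}{2(1+x)}$ and combining with the outer prefactor, the $(1-qx)$ factors cancel.

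After collecting, I would use the crucial factorization $1-qx^2=(1-q^{1/2}x)(1+q^{1/2}x)$ in the outer prefactor to reduce every surviving denominator to $(1-q^{1/2})(1+xq^{1/2})$. The coefficients of $A$ and $C$ then match the statement of Corollary \ref{corl-f} immediately. The one step that deserves care is checking that the coefficient of the repeated term $B$ collapses to $\frac12$: concretely, the identity
\[
1-qx-q^{1/2}(1-x)=(1-q^{1/2})(1+q^{1/2}x)
\]
must be verified, after which the $B$-coefficient simplifies from $\frac{(1-qx)-q^{1/2}(1-x)}{2(1-q^{1/2})(1+xq^{1/2})}$ to exactly $\frac12$, agreeing with the corollary.

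The proof is thus essentially an algebraic specialization with no new analytic input beyond Theorem \ref{thm-c}. The only mild obstacle is the telescoping of the middle coefficient to $\frac12$; everything else is a mechanical substitution and cancellation. I would write out the $\ell=1$ version of Theorem \ref{thm-c} explicitly, split the sum by the three distinct Pochhammer blocks $A$, $B$, $C$, and verify each coefficient against the stated right-hand side.
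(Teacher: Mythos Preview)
Your proposal is correct and follows exactly the paper's approach: the corollary is stated as the specialization $\ell=1$ of Theorem~\ref{thm-c}, with no separate proof given, so the only work is the algebraic bookkeeping you describe. Your verification of the coefficient of $B$ via $1-qx-q^{1/2}(1-x)=(1-q^{1/2})(1+q^{1/2}x)$ is the right simplification, and the coefficients of $A$ and $C$ drop out immediately after the factorization $1-qx^2=(1-q^{1/2}x)(1+q^{1/2}x)$.
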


Performing the replacement $a\to q^{3x}$ in Theorem \ref{thm-c} and
then letting $q\to1$, we obtain the following equation.

\begin{prop}\label{prop-d}
For a nonnegative integer $\ell$ and a complex number $x$, there
holds
 \bnm
&&\:\xxqdn_3F_2\ffnk{cccc}{\frac{3}{4}}{3x,&-\ell-3x,&-n}{&\frac{1}{2}-\ell,&-3n}
=\frac{1}{2}\fnk{ccccc}{1+3x,\frac{1}{2}+3x}{1+6x,\frac{1}{2}}_{\ell}
\\&&\:\xxqdn\:\:\times\:
\sum_{i=0}^{\ell}\fnk{ccccc}{-\ell,6x}{1,1+6x+\ell}_i
\Bigg\{\fnk{ccccc}{\frac{1+i}{3}+x,&\frac{2-i}{3}-x}{\frac{1}{3},&\frac{2}{3}}_{n}
+\fnk{ccccc}{\frac{2+i}{3}+x,&\frac{1-i}{3}-x}{\frac{1}{3},&\frac{2}{3}}_{n}\Bigg\}.
 \enm
\end{prop}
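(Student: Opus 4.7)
The plan is to obtain Proposition \ref{prop-d} as the classical $q\to1$ limit of Theorem \ref{thm-c}, exactly in parallel with how Proposition \ref{prop-a} was derived from Theorem \ref{thm-a}. First I would perform the substitution $x\mapsto q^{3x}$ throughout the identity in Theorem \ref{thm-c}, so that every remaining shifted factorial is of the form $(q^{\alpha};q^{\beta})_{\bullet}$ with $\alpha$ a linear combination of $x,\ell,i,k$ and fixed rationals. Then I would send $q\to1$ on both sides using the standard asymptotics $(q^{\alpha};q)_{k}\sim(1-q)^{k}(\alpha)_{k}$ and $(q^{\alpha};q^{3})_{k}\sim 3^{k}(1-q)^{k}(\alpha/3)_{k}$, together with $(-1;q)_{k}\to 2^{k}$ and $(-q^{1/2};q)_{k}\to 2^{k}$.

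For the left-hand side, the $(1-q)$ powers cancel exactly: the numerator supplies three factors $(1-q)^{k}$ (one from $(q^{-3n};q^{3})_{k}$, and one each from $(q^{3x};q)_{k}$ and $(q^{-\ell-3x};q)_{k}$), and the denominator supplies three as well (from $(q;q)_{k}$, $(q^{1/2-\ell};q)_{k}$, and $(q^{-3n};q)_{k}$). The two denominator factors $(-1;q)_{k}$ and $(-q^{1/2};q)_{k}$ contribute a combined $4^{k}$, while the ratio $(q^{-3n};q^{3})_{k}/(q^{-3n};q)_{k}$ contributes $3^{k}(-n)_{k}/(-3n)_{k}$. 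Multiplied by the $q^{k}$ already present in the summand, these combine to the characteristic $(3/4)^{k}$ argument, reproducing the $_3F_2$ of Gosper type on the left of Proposition \ref{prop-d}.

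For the right-hand side I would apply the same asymptotics termwise. The outer prefactor collapses via $(q^{1+3x};q)_{\ell}(q^{1/2+3x};q)_{\ell}/[(q^{1+6x};q)_{\ell}(q^{1/2};q)_{\ell}]\to(1+3x)_{\ell}(1/2+3x)_{\ell}/[(1+6x)_{\ell}(1/2)_{\ell}]$ after the common $(1-q)^{\ell}$ powers cancel. Inside the $i$-sum, $q^{(\ell+1/2)i}\to 1$ and $(1+xq^{i})/(2(1+x))\to 1/2$ after the substitution, while the ratio $(q^{-\ell};q)_{i}(q^{6x};q)_{i}/[(q;q)_{i}(q^{6x+\ell+1};q)_{i}]$ becomes $(-\ell)_{i}(6x)_{i}/[i!(6x+\ell+1)_{i}]$; in each of the two base-$q^{3}$ Pochhammer ratios inside the braces, the factors $3^{n}(1-q)^{n}$ cancel between numerator and denominator, leaving precisely the $[(1+i)/3+x,(2-i)/3-x;1/3,2/3]_{n}$ and $[(2+i)/3+x,(1-i)/3-x;1/3,2/3]_{n}$ that appear in Proposition \ref{prop-d}.

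The only real obstacle here is the bookkeeping: one must verify that the powers of $(1-q)$ balance exactly on both sides (so that nothing blows up or vanishes in the limit), and that the two $2^{k}$ contributions from $(-1;q)_{k}$ and $(-q^{1/2};q)_{k}$, combined with the base-$q^{3}$ versus base-$q$ asymmetry of $(q^{-3n};q^{3})_{k}/(q^{-3n};q)_{k}$ and the $q^{k}$ in the summand, correctly assemble into the $3/4$ argument. Once this is checked, no further hypergeometric manipulation is required and Proposition \ref{prop-d} falls out termwise.
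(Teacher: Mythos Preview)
Your proposal is correct and matches the paper's own derivation exactly: the paper obtains Proposition~\ref{prop-d} by substituting $x\mapsto q^{3x}$ in Theorem~\ref{thm-c} and sending $q\to1$, with no further manipulation. Your detailed bookkeeping of the $(1-q)$-power cancellations and the emergence of the $(3/4)^{k}$ argument is accurate and simply fleshes out what the paper states in one line.
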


\begin{corl}[$\ell=1$ in Proposition \ref{prop-d}]\label{corl-g}
\bnm
&&_3F_2\ffnk{cccc}{\frac{3}{4}}{3x,&-1-3x,&-n}{&-\frac{1}{2},&-3n}\\
&&\:\:=\:\frac{1+3x}{2}\fnk{ccccc}{\frac{1}{3}+x,&\frac{2}{3}-x}{\frac{1}{3},&\frac{2}{3}}_{n}
+\frac{1}{2}\fnk{ccccc}{\frac{2}{3}+x,&\frac{1}{3}-x}{\frac{1}{3},&\frac{2}{3}}_{n}
 -\frac{3x}{2}\fnk{ccccc}{1+x,&-x}{\frac{1}{3},&\frac{2}{3}}_{n}.
\enm
\end{corl}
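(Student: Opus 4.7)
The plan is to obtain Corollary \ref{corl-g} as the $\ell=1$ specialization of Proposition \ref{prop-d}, collecting the three surviving bracket terms. The key structural observation is that the pair $\bigl\{\fnk{ccccc}{\frac{1+i}{3}+x,&\frac{2-i}{3}-x}{\frac{1}{3},&\frac{2}{3}}_n,\,\fnk{ccccc}{\frac{2+i}{3}+x,&\frac{1-i}{3}-x}{\frac{1}{3},&\frac{2}{3}}_n\bigr\}$ evaluated at $i=0$ and $i=1$ produces only three distinct brackets (the middle one appearing twice), matching the shape of the corollary.

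First I would evaluate the leading prefactor
$$\tfrac{1}{2}\fnk{ccccc}{1+3x,&\tfrac{1}{2}+3x}{1+6x,&\tfrac{1}{2}}_{1}=\tfrac{1+3x}{2},$$
using $\tfrac{1}{2}+3x=(1+6x)/2$ so that the $(1+6x)$ factors cancel, together with $(\tfrac{1}{2})_1=\tfrac{1}{2}$. Next I would compute the two coefficients of the outer sum from the identities $\fnk{ccccc}{-1,&6x}{1,&2+6x}_0=1$ and $\fnk{ccccc}{-1,&6x}{1,&2+6x}_1=-6x/(2+6x)=-3x/(1+3x)$.

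Finally I would collect coefficients in front of each of the three distinct bracket types. The bracket $\fnk{ccccc}{\tfrac{1}{3}+x,&\tfrac{2}{3}-x}{\tfrac{1}{3},&\tfrac{2}{3}}_n$ appears only from the $i=0$ summand, contributing the factor $\tfrac{1+3x}{2}$. The bracket $\fnk{ccccc}{\tfrac{2}{3}+x,&\tfrac{1}{3}-x}{\tfrac{1}{3},&\tfrac{2}{3}}_n$ arises both from the $i=0$ term (second summand inside the braces) and from the $i=1$ term (first summand), giving the total weight $\tfrac{1+3x}{2}\bigl(1-\tfrac{3x}{1+3x}\bigr)=\tfrac{1}{2}$. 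The bracket $\fnk{ccccc}{1+x,&-x}{\tfrac{1}{3},&\tfrac{2}{3}}_n$ comes solely from the second summand at $i=1$, with coefficient $\tfrac{1+3x}{2}\cdot\bigl(-\tfrac{3x}{1+3x}\bigr)=-\tfrac{3x}{2}$. These three coefficients match Corollary \ref{corl-g} verbatim. There is no serious obstacle here beyond careful bookkeeping of the $(1+3x)$ and $(1+6x)$ cancellations; the argument is a purely mechanical specialization of Proposition \ref{prop-d}.
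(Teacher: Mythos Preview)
Your proposal is correct and follows exactly the route indicated by the paper, which presents Corollary~\ref{corl-g} simply as the $\ell=1$ specialization of Proposition~\ref{prop-d} without spelling out the arithmetic. Your computation of the prefactor, the two sum coefficients, and the collection of the three distinct brackets is accurate and matches the stated result.
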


\begin{thm}\label{thm-d}
For a nonnegative integer $\ell$ and a complex number $x$, there
holds
 \bnm
&&\xqdn\sum_{k=0}^n(q^{-3n};q^3)_k\ffnk{ccccc}{q}{x,q^{1-\ell}/x}
{q,-q^{1-\ell},q^{1/2},-q^{3/2},q^{-1-3n}}_kq^k
\\&&\xqdn\:\:=\:\ffnk{ccccc}{q}{x,-x}{x^2,-1}_{\ell}\sum_{i=0}^{\ell}(-1)^iq^{\ell i}\frac{1-x^2q^{2i-1}}{(1-x^2q^{-1})(1-q^{1/2})}
\ffnk{ccccc}{q}{q^{-\ell},x^2q^{-1}}{q,x^2q^{\ell}}_i
\\&&\xqdn\:\:\times\:\Bigg\{\frac{1-xq^i}{1+xq^{i-1/2}}\ffnk{ccccc}{q^3}{q^{3+i}x,q^{3-i}/x}{q^2,q^4}_{n}
-\frac{q^{1/2}(1-xq^{i-1})}{1+xq^{i-1/2}}\ffnk{ccccc}{q^3}{q^{2+i}x,q^{4-i}/x}{q^2,q^4}_{n}\Bigg\}.
 \enm
\end{thm}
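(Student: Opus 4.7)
The plan is to follow the same template as the proofs of Theorems \ref{thm-a}, \ref{thm-b}, and \ref{thm-c}: use the terminating $_6\phi_5$-series identity \eqref{terminating-65} to produce a factor equal to $1$, multiply it into the LHS of Theorem \ref{thm-d}, interchange the order of summation, and evaluate the resulting inner $k$-sum by invoking a summation result already at our disposal.

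Concretely, I would take $a=x^2q^{-1}$, $b=xq^{k}$, $c=-x$ in \eqref{terminating-65}---exactly the substitution used in the proof of Theorem \ref{thm-a}---so as to reproduce the ``equals~$1$'' identity \eqref{series}. Multiplying Theorem \ref{thm-d}'s LHS by this factor of $1$ and swapping the two summations should produce an outer $i$-sum over $0\le i\le\ell$ carrying the prefactor $\ffnk{ccccc}{q}{x,-x}{x^2,-1}_{\ell}$ together with the summand $(-1)^iq^{\ell i}\frac{1-x^2q^{2i-1}}{1-x^2q^{-1}}\ffnk{ccccc}{q}{q^{-\ell},x^2q^{-1}}{q,x^2q^{\ell}}_i$, and an inner $k$-sum of the form
\[\sum_{k=0}^{n}(q^{-3n};q^3)_k\ffnk{ccccc}{q}{xq^i,\,q^{1-i}/x}{q,-q,q^{1/2},-q^{3/2},q^{-1-3n}}_k q^k.\]
This sum does not match Corollary \ref{corl-c} (whose denominator contains $q^{3/2},-q^{1/2}$ rather than $q^{1/2},-q^{3/2}$); instead it is precisely the intermediate identity displayed in the proof of Proposition \ref{prop-c}---the one obtained from Theorem \ref{thm-b} via the substitutions $q\to q^2$ followed by $q\to-q^{1/2}$---specialized at $\ell=1$ and with $x$ replaced by $xq^i$. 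Feeding that evaluation into the outer $i$-sum should reproduce the bracket $\{\cdots\}$ on the right-hand side of Theorem \ref{thm-d}.

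The main obstacle will be the algebraic reconciliation of prefactors: the factor $\ffnk{ccccc}{q}{xq^{-1},xq^{-1/2}}{x^2q^{-1},q^{-1/2}}_{1}$ arising from the intermediate identity must be simplified, using $1-x^2q^{-1}=(1-xq^{-1/2})(1+xq^{-1/2})$ together with $1-q^{-1/2}=-q^{-1/2}(1-q^{1/2})$, so as to extract the global factor $(1-q^{1/2})^{-1}$ and the two rational coefficients $\tfrac{1-xq^i}{1+xq^{i-1/2}}$ and $\tfrac{q^{1/2}(1-xq^{i-1})}{1+xq^{i-1/2}}$ that appear in the statement. Once this bookkeeping is handled, collecting all pieces yields the stated right-hand side.
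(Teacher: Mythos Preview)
Your proposal is correct and follows essentially the same approach as the paper: the paper likewise inserts the factor \eqref{series} (from the substitution $a=x^2q^{-1}$, $b=xq^{k}$, $c=-x$), interchanges sums, and evaluates the resulting inner $k$-sum. The only cosmetic difference is that the paper names the needed evaluation ``the equivalent form of Corollary~\ref{corl-c}'' (obtained from Corollary~\ref{corl-c} by $q\to q^{2}$ then $q\to -q^{1/2}$), which is exactly the $\ell=1$ case of the intermediate identity in the proof of Proposition~\ref{prop-c} that you cite, so the two routes coincide.
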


\begin{proof}
In accordance with \eqref{series}, we gain the following relation:
  \bnm
&&\xqdn\sum_{k=0}^n(q^{-3n};q^3)_k\ffnk{ccccc}{q}{x,q^{1-\ell}/x}
{q,-q^{1-\ell},q^{1/2},-q^{3/2},q^{-1-3n}}_kq^k
\\&&\xqdn\:\:=\:\sum_{k=0}^n(q^{-3n};q^3)_k\ffnk{ccccc}{q}{x,q^{1-\ell}/x}
{q,-q^{1-\ell},q^{1/2},-q^{3/2},q^{-1-3n}}_kq^k
\ffnk{ccccc}{q}{q^{1-\ell}/x,-q^{1-\ell}/x}{q^{1-\ell}/x^2,-q^{1-\ell+k}}_{\ell}
\\&&\xqdn\:\:\times\:
\sum_{i=0}^{\ell}(-1)^iq^{\ell i}\frac{1-x^2q^{2i-1}}{1-x^2q^{-1}}
\ffnk{ccccc}{q}{q^{-\ell},xq^k,x^2q^{-1}}{q,x,x^2q^{\ell}}_i
\ffnk{ccccc}{q}{q^{1-\ell+k}/x}{q^{1-\ell}/x}_{\ell-i}.
 \enm
Interchange the summation order for the last double sum to achieve
  \bnm
&&\xxqdn\sum_{k=0}^n(q^{-3n};q^3)_k\ffnk{ccccc}{q}{x,q^{1-\ell}/x}
{q,-q^{1-\ell},q^{1/2},-q^{3/2},q^{-1-3n}}_kq^k
\\&&\xxqdn\:\:=\ffnk{ccccc}{q}{q^{1-\ell}/x,-q^{1-\ell}/x}{q^{1-\ell}/x^2,-q^{1-\ell}}_{\ell}
\sum_{i=0}^{\ell}(-1)^iq^{\ell i}\frac{1-x^2q^{2i-1}}{1-x^2q^{-1}}
\ffnk{ccccc}{q}{q^{-\ell},x^2q^{-1}}{q,x^2q^{\ell}}_i
\\&&\xxqdn\:\:\times\:
\sum_{k=0}^n(q^{-3n};q^3)_k\ffnk{ccccc}{q}{xq^i,q^{1-i}/x}
{q,-q,q^{1/2},-q^{3/2},q^{-1-3n}}_kq^k.
 \enm
 Evaluating the series on the last line by the equivalent form of
 Corollary \ref{corl-c}:
 \bnm
&&\xqdn\sum_{k=0}^n(q^{-3n};q^3)_k\ffnk{ccccc}{q}{x,q/x}
{q,-q,q^{1/2},-q^{3/2},q^{-1-3n}}_kq^k
\\&&\xqdn\:\:=
\frac{1-x}{(1-q^{1/2})(1+xq^{-1/2})}\ffnk{ccccc}{q^3}{q^3x,q^{3}/x}{q^2,q^4}_{n}
\\&&\xqdn\:\:+\:
\frac{1-xq^{-1}}{(1-q^{-1/2})(1+xq^{-1/2})}\ffnk{ccccc}{q^3}{q^2x,q^{4}/x}{q^2,q^4}_{n},
 \enm
we attain Theorem \ref{thm-d} to finish the proof.
\end{proof}

\begin{corl}[$\ell=1$ in Theorem \ref{thm-d}]\label{corl-h}
  \bnm
&&\xqdn\sum_{k=0}^n(q^{-3n};q^3)_k\ffnk{ccccc}{q}{x,1/x}
{q,-1,q^{1/2},-q^{3/2},q^{-1-3n}}_kq^k
\\&&\xqdn\:\:=
\frac{1-qx}{2(1-q^{1/2})(1+xq^{1/2})}\ffnk{ccccc}{q^3}{q^4x,q^2/x}{q^2,q^4}_{n}
 \\&&\xqdn\:\:+\:
 \frac{(1-x)^2}{2(1+xq^{-1/2})(1+xq^{1/2})}\ffnk{ccccc}{q^3}{q^3x,q^3/x}{q^2,q^4}_{n}
 \\&&\xqdn\:\:-\:
 \frac{q^{1/2}(1-xq^{-1})}{2(1-q^{1/2})(1+xq^{-1/2})}\ffnk{ccccc}{q^3}{q^2x,q^4/x}{q^2,q^4}_{n}.
 \enm
\end{corl}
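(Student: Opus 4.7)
\textbf{Proof plan for Corollary \ref{corl-h}.} The statement is the $\ell = 1$ specialization of Theorem \ref{thm-d}, which is already established, so the task is purely to substitute $\ell = 1$ into the right-hand side of that theorem and simplify.

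First, the outer prefactor collapses: $(1-x)(1+x)/[(1-x^2)\cdot 2] = 1/2$, which accounts for the uniform factor $1/2$ on every line of the corollary. Next, I would compute the two outer-sum contributions separately. For $i=0$ the coefficient preceding the braced expression is $1/(1-q^{1/2})$ outright. For $i=1$ the product of $(-1)^i$, $q^{\ell i}$, the balanced factor $(1-x^2q)/(1-x^2q^{-1})$, and the $q$-Pochhammer ratio $(q^{-1},x^2q^{-1};q)_1/(q,x^2q;q)_1$ also telescopes, using $1-q^{-1} = -(1-q)/q$, to $1/(1-q^{1/2})$. This clean coincidence is what makes the rest of the expansion tractable.

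Expanding the two braced brackets then yields four terms involving three distinct $q^3$-Pochhammer ratios of shape $(q^{j}x,q^{6-j}/x)$ with $j\in\{2,3,4\}$: the ratio with $j=4$ comes only from $i=1$ (first braced term), the one with $j=2$ comes only from $i=0$ (second braced term), and the one with $j=3$ receives a contribution from both $i=0$ (first) and $i=1$ (second). The coefficients of the first and third displayed lines of Corollary \ref{corl-h} are read off at once from the unique contributions.

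The one non-mechanical step is combining the two contributions to the mixed ratio $(q^3x,q^3/x)$, namely $(1-x)/(1+xq^{-1/2})$ and $-q^{1/2}(1-x)/(1+xq^{1/2})$. Placing these over the common denominator $(1+xq^{-1/2})(1+xq^{1/2})$, the numerator becomes $(1-x)[(1+xq^{1/2}) - q^{1/2}(1+xq^{-1/2})] = (1-x)(1-q^{1/2})(1-x)$, and the factor $1-q^{1/2}$ cancels the $1/(1-q^{1/2})$ already in place, producing $(1-x)^2/[2(1+xq^{-1/2})(1+xq^{1/2})]$. This small cancellation is the only part of the argument that is not automatic; once it is carried out, the three displayed terms of Corollary \ref{corl-h} are recovered.
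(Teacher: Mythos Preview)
Your proposal is correct and matches the paper's approach exactly: the corollary is stated in the paper simply as the $\ell=1$ specialization of Theorem~\ref{thm-d} with no separate proof given, so the entire argument is precisely the substitution and simplification you describe. Your verification that both $i=0$ and $i=1$ carry the same outer coefficient $1/(1-q^{1/2})$, and your cancellation of $1-q^{1/2}$ in the combined $(q^3x,q^3/x)$ term, are accurate and constitute all the algebra needed.
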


Employing the substitution $a\to q^{3x}$ in Theorem \ref{thm-d} and
then letting $q\to1$, we obtain the following equation.

\begin{prop}\label{prop-e}
For a nonnegative integer $\ell$ and a complex number $x$, there
holds
 \bnm
&&\:\xxqdn_3F_2\ffnk{cccc}{\frac{3}{4}}{3x,&1-\ell-3x,&-n}{&\frac{1}{2},&-1-3n}
\\&&\:\xxqdn\:\:=\:
\fnk{ccccc}{3x}{6x}_{\ell}\sum_{i=0}^{\ell}(-1)^i\frac{6x+2i-1}{6x-1}\fnk{ccccc}{-\ell,6x-1}{1,6x+\ell}_i
\\&&\:\xxqdn\:\:\times\:
\Bigg\{(3x+i)\fnk{ccccc}{\frac{3+i}{3}+x,&\frac{3-i}{3}-x}{\frac{2}{3},&\frac{4}{3}}_{n}
-(3x+i-1)\fnk{ccccc}{\frac{2+i}{3}+x,&\frac{4-i}{3}-x}{\frac{2}{3},&\frac{4}{3}}_{n}\Bigg\}.
 \enm
\end{prop}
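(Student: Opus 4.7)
The plan is to extract Proposition \ref{prop-e} from Theorem \ref{thm-d} by the same route used for Propositions \ref{prop-a}, \ref{prop-b}, and \ref{prop-d}: I will perform the specialization $x\to q^{3x}$ in Theorem \ref{thm-d} and pass to the limit $q\to 1^-$, relying on the three standard asymptotics
\[
(q^a;q)_m\sim(1-q)^m(a)_m,\qquad (q^a;q^3)_m\sim(1-q^3)^m(a/3)_m,\qquad (-q^a;q)_m\to 2^m.
\]

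On the LHS of Theorem \ref{thm-d}, after $x\to q^{3x}$, the generic summand is $(q^{-3n};q^3)_k\,q^k$ times a ratio of six base-$q$ Pochhammer symbols. The two numerator factorials contribute $(1-q)^{2k}(3x)_k(1-\ell-3x)_k$, the three positive-base denominator factorials contribute $(1-q)^{3k}k!(1/2)_k(-1-3n)_k$, and the two negative-base denominator factorials $(-q^{1-\ell};q)_k$ and $(-q^{3/2};q)_k$ both have limit $2^k$. Coupled with $(q^{-3n};q^3)_k\sim(1-q)^k 3^k(-n)_k$, the $(1-q)$-powers cancel and the residual $3^k/4^k$ together with $q^k\to 1$ yield precisely the ${}_3F_2(3/4)$ on the left side of Proposition \ref{prop-e}.

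On the RHS, the prefactor $\ffnk{ccccc}{q}{x,-x}{x^2,-1}_\ell$ tends to $(3x)_\ell/(6x)_\ell$, since $(-q^{3x};q)_\ell$ and $(-1;q)_\ell$ both have limit $2^\ell$ and cancel. Inside the sum, $q^{\ell i}\to 1$, the rational factor $(1-q^{6x+2i-1})/(1-q^{6x-1})$ tends to $(6x+2i-1)/(6x-1)$, and the base-$q$ Pochhammer block converges to $(-\ell)_i(6x-1)_i/[(1)_i(6x+\ell)_i]$. Denoting the two $q^3$-Pochhammer blocks inside the braces of Theorem \ref{thm-d} by $A_i$ and $B_i$, these converge respectively to $\fnk{ccccc}{\frac{3+i}{3}+x,\frac{3-i}{3}-x}{\frac{2}{3},\frac{4}{3}}_n$ and $\fnk{ccccc}{\frac{2+i}{3}+x,\frac{4-i}{3}-x}{\frac{2}{3},\frac{4}{3}}_n$.

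The main obstacle, and the only delicate point, is the interaction of the singular factor $1/(1-q^{1/2})$ inside the sum with the bracketed difference. Since $1-q^{1/2}\sim(1-q)/2$, this contributes a divergence of order $2/(1-q)$. After $x\to q^{3x}$, however, the bracket reads
\[
\frac{1-q^{3x+i}}{1+q^{3x+i-1/2}}A_i-\frac{q^{1/2}(1-q^{3x+i-1})}{1+q^{3x+i-1/2}}B_i,
\]
in which $1+q^{3x+i-1/2}\to 2$ while both numerators are of order $(1-q)$. Extracting leading terms gives $\tfrac{1}{2}(1-q)\bigl[(3x+i)A_i^{\mathrm{lim}}-(3x+i-1)B_i^{\mathrm{lim}}\bigr]+o(1-q)$, so multiplication by the $2/(1-q)$ blow-up cancels the singularity and reproduces the combination in braces. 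Assembling every piece yields Proposition \ref{prop-e}; all remaining limits are routine applications of the three asymptotic rules above.
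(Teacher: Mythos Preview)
Your proof is correct and follows exactly the paper's own approach: the paper derives Proposition~\ref{prop-e} by substituting $x\to q^{3x}$ in Theorem~\ref{thm-d} and letting $q\to 1$, and you have carried out precisely this limit, including the crucial observation that the $1/(1-q^{1/2})$ blow-up is compensated by the $O(1-q)$ numerators $1-q^{3x+i}$ and $1-q^{3x+i-1}$ in the brace.
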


\begin{corl}[$\ell=1$ in Proposition \ref{prop-d}]\label{corl-i}
\bnm
&&_3F_2\ffnk{cccc}{\frac{3}{4}}{3x,&-3x,&-n}{&\frac{1}{2},&-1-3n}\\
&&\:\:=\:\frac{1+3x}{2}\fnk{ccccc}{\frac{4}{3}+x,&\frac{2}{3}-x}{\frac{2}{3},&\frac{4}{3}}_{n}
+\frac{1-3x}{2}\fnk{ccccc}{\frac{2}{3}+x,&\frac{4}{3}-x}{\frac{2}{3},&\frac{4}{3}}_{n}.
\enm
\end{corl}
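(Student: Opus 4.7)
The plan is to obtain Corollary \ref{corl-i} as a direct specialization of the preceding proposition. Although the corollary is labeled ``$\ell=1$ in Proposition \ref{prop-d}'', the numerator/denominator parameters $3x,-3x,-n;\frac{1}{2},-1-3n$ (with $1-\ell-3x=-3x$ at $\ell=1$, and the lower parameter $\frac{1}{2}$ not $\frac{1}{2}-\ell$) match Proposition \ref{prop-e}, so this is the identity to specialize.

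First, I would evaluate the outer prefactor at $\ell=1$, namely
\[
\fnk{ccccc}{3x}{6x}_{1}=\frac{3x}{6x}=\frac{1}{2}.
\]
Next, I would write out the two surviving terms of the finite sum over $i\in\{0,1\}$. For $i=0$, the factor $\frac{6x-1}{6x-1}=1$ and the shifted-factorial ratio is empty, so the contribution is
\[
3x\,\fnk{cc}{1+x,1-x}{\tfrac{2}{3},\tfrac{4}{3}}_{n}-(3x-1)\fnk{cc}{\tfrac{2}{3}+x,\tfrac{4}{3}-x}{\tfrac{2}{3},\tfrac{4}{3}}_{n}.
\]
For $i=1$, the sign factor is $(-1)$, the factor $\frac{6x+2i-1}{6x-1}=\frac{6x+1}{6x-1}$, and
\[
\fnk{cc}{-1,6x-1}{1,6x+1}_{1}=\frac{(-1)(6x-1)}{(1)(6x+1)}=-\frac{6x-1}{6x+1},
\]
which combine to give an overall coefficient of $+1$ in front of the bracketed expression
\[
(3x+1)\fnk{cc}{\tfrac{4}{3}+x,\tfrac{2}{3}-x}{\tfrac{2}{3},\tfrac{4}{3}}_{n}-3x\,\fnk{cc}{1+x,1-x}{\tfrac{2}{3},\tfrac{4}{3}}_{n}.
\]

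Adding the $i=0$ and $i=1$ contributions, I would observe that the terms proportional to $3x\,\fnk{cc}{1+x,1-x}{2/3,4/3}_{n}$ cancel exactly, leaving
\[
(3x+1)\fnk{cc}{\tfrac{4}{3}+x,\tfrac{2}{3}-x}{\tfrac{2}{3},\tfrac{4}{3}}_{n}-(3x-1)\fnk{cc}{\tfrac{2}{3}+x,\tfrac{4}{3}-x}{\tfrac{2}{3},\tfrac{4}{3}}_{n}.
\]
Multiplying this by the outer $\frac{1}{2}$ yields exactly the right-hand side of Corollary \ref{corl-i}, with the $(1-3x)/2$ coefficient arising from $-(3x-1)/2$.

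There is no genuine obstacle here: the entire argument is a careful specialization and a single cancellation. The only minor care needed is to verify that the apparent reference ``Proposition \ref{prop-d}'' in the corollary's header is indeed meant to be Proposition \ref{prop-e} (checking the denominator parameters confirms this), and to track the signs accurately when combining the $i=1$ prefactor with the shifted-factorial ratio.
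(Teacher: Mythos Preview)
Your proof is correct and is exactly the approach the paper intends: the corollary is simply the $\ell=1$ specialization of the preceding proposition, and you correctly identify that the label ``Proposition \ref{prop-d}'' is a typo for Proposition \ref{prop-e} (the denominator parameters $\tfrac{1}{2}$ and $-1-3n$ confirm this). Your computation of the prefactor, the two summands, and the cancellation of the $3x\,[1+x,1-x;\tfrac{2}{3},\tfrac{4}{3}]_n$ terms is accurate.
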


With the change of the parameter $\ell$, these theorems and
propositions can create more concrete formulas. They will not be
laid out in this paper.

The case $n\to \infty$ of \eqref{gosper-b} reads as
 \bnm
_2F_1\ffnk{cccc}{\frac{1}{4}}{3x,&2-3x}{&\frac{3}{2}}
=\frac{\Gamma(2/3)\Gamma(4/3)}{\Gamma(2/3+x)\Gamma(4/3-x)}.
 \enm
Letting $x=1/3$ in this equation, we recover the beautiful series
for $\pi$ (cf. \citu{weisstein}{Equation (27)}):
 \bnm
 \sum_{k=0}^{\infty}\frac{k!}{(3/2)_k}\frac{1}{4^k}=\frac{2\pi}{3\sqrt{3}}.
 \enm
Unfortunately, we don't find new series for $\pi$ from the family of
summation formulas for $_3F_2(\frac{3}{4})$-series.

\textbf{Acknowledgments}

The work is supported by the Natural Science Foundations of China
(Nos. 11301120, 11201241 and 11201291).


\end{document}